\newcommand \nc{\newcommand}
\newtheorem{theorem}{Theorem}[section]
\newtheorem{lemma}[theorem]{Lemma}
\newtheorem{corollary}[theorem]{Corollary}
\newtheorem{definition}[theorem]{Definition}
\newtheorem{remark}[theorem]{Remark}
\nc{\ba}{\begin{array}}\nc{\ea}{\end{array}}
\nc{\be}{\begin{eqnarray}}\nc{\ee}{\end{eqnarray}}
\nc{\beq}{\begin{equation}}\nc{\eeq}{\end{equation}}
\nc{\bex}{\begin{eqnarray*}}\nc{\eex}{\end{eqnarray*}}
\nc{\btm}{\begin{theorem}} \nc{\etm}{\end{theorem}}
\nc{\blm}{\begin{lemma}} \nc{\elm}{\end{lemma}}
\nc{\R}{\mathbb{R}}  \nc{\ld}{\lambda}
\nc{\va}{\varphi}
\nc{\ve}{\varepsilon}
\author{Yimei Li, \ Changyou Wang}
\address{School of Mathematical Sciences, Beijing Normal University, Beijing, China 100875}
\email{lym@mail.bnu.edu.cn}
\address{Department of Mathematics, Purdue University, West Lafayette, IN 47907, USA}
\email{wang2482@purdue.edu}
\title[Cosserat Elasticity]
{Regularity of weak solution of variational problems modeling the Cosserat micropolar elasticity}
\begin{document}
\begin{abstract} In this paper, we consider weak solutions of the Euler-Lagrange equation to a variational energy functional modeling the geometrically nonlinear Cosserat micropolar elasticity of continua in dimension three, which
is a system coupling between the Poisson equation and the equation of $p$-harmonic maps ($2\le p\le 3$). We show
that if a weak solutions is stationary, then its singular set is discrete for $2<p<3$ and has zero $1$-dimensional
Hausdorff measure for $p=2$. 
If, in addition, it is a stable-stationary weak solution, then it is regular everywhere when $p\in [2, \frac{32}{15}]$. 
\end{abstract}

\maketitle
\section{Introduction}
\setcounter{equation}{0}
\setcounter{theorem}{0}

General continuum models involving independent rotations were introduced by 
the Eugene and Francois Cosserat brothers in 1909 \cite{Cosserat}, and were later rediscovered in 1960's (see Eringen \cite{Eringen}). The micromorphic balance equations derived by Eringen \cite{Eringen}
were formally justified by \cite{ChenLee1, ChenLee2} as a more realistic continuum model based on molecular dynamics and ensemble averaging. The major difficulty of mathematical treatment in the finite strain case
comes from the geometrically exact formulation of the theory and the appearance of nonlinear manifolds
that are necessary to describe the microstructure.  Among many variants and vast body of results
of Cosserat theory available in the literature, 
P. Neff \cite{Neff1, Neff2, Neff3} has made some systematical analysis of the Cosserat theory for micropolar elastic bodies by establishing the existence of minimizers in the framework of calculus of variations. Very recently,
in an interesting article \cite{Gastel2018}, Gastel has shown a partial regularity theorem of minimizing weak
solutions to a Cosserat energy functional for microplar elastic bodies. 

The elastic body $\Omega\subset\mathbb R^3$ is assumed to be a bounded Lipschitz domain. 
The elastic body can be deformed by a translation mapping $\phi:\Omega\to\mathbb R^3$, and  $\phi(x)-x$ denotes the (small) dislocation for $x\in\Omega$. Furthermore,
the micropolar structure of the material associates each point $x\in\Omega$ with an orthonormal frame 
that is free to rotate in $\mathbb R^3$ by an orthogonal matrix $R(x)\in {SO}(3)$. 
Both translations and rotations induce
material stresses that are given by $R^t \nabla\phi-I_3$ and $R^t \nabla R$ respectively. 
The Cosserat energy functional stored in the elastic body $\Omega$ consists of the contributions
by both translations and rotations.  For a pair of translation and rotation maps $(\phi, R):\Omega\to \mathbb R^3\times SO(3)$, the contribution of rotational stresses to the Cosserat energy is given by
$$\lambda \int_\Omega |R^t\nabla R|^p\,dx\ (=\lambda \int_\Omega |\nabla R|^p\,dx)$$
for some $\lambda>0$ and $2\le p\le 3$, while the contribution of translational stresses is given by
$$\int_\Omega \big|\mathbb P(R^t\nabla \phi-I_3)\big|^2\,dx,$$
where $\mathbb{P}:\mathbb{R}^{3\times 3}\to \mathbb{R}^{3\times 3}$ is the linear map defined by
$$\mathbb{P}(A)=\sqrt{\mu_1} {\rm{devsym}}\ A +\sqrt{\mu_c} {\rm{skew}}\ A +\sqrt{\mu_2} ({\rm{tr}} A) I_3,
\ A\in \mathbb R^{3\times 3},$$
and 
$${\rm{devsym}}\ A=\frac12 (A+A^t)-({\rm{tr}} A) I_3, 
\ \ {\rm{skew}}\ A=\frac12(A-A^t),$$
denotes the deviatoric symmetric part of $A$ and the skew-symmetric part of $A$ respectively. 
The constants $\mu_1, \mu_c,$ and $\mu_2$ are assumed to be positive parameters in this paper.

The elastic body $\Omega$ may be subject to external forces, such as gravity or electromagnetic forces,
that can be modeled by
$$\int_\Omega \langle\phi-x, f\rangle\,dx +\int_\Omega \langle R, M\rangle\,dx,$$
where $f:\Omega\to\mathbb R^3$ and $M:\Omega\to \mathbb R^{3\times 3}$ are given functions.
Collecting together all these terms, the Cosserat energy functional is given by
\begin{equation}\label{Coss}
{\rm{Coss}}(\phi, R)=\int_\Omega \big(|\mathbb P(R^t\nabla\phi-I_3)|^2+|\nabla R|^p+\langle\phi-x, f\rangle 
+\langle R, M\rangle\big)\,dx.
\end{equation}
Recall that $(\phi, R)\in H^1(\Omega,\mathbb R^3)\times W^{1,p}(\Omega, SO(3))$ is a minimizer of
the Cosserat energy functional, if
$$
{\rm{Coss}}(\phi, R)\le {\rm{Coss}}(\widetilde{\phi}, \widetilde{R}),
$$
holds for any $(\widetilde{\phi}, \widetilde{R})\in H^1(\Omega,\mathbb R^3)\times W^{1,p}(\Omega, SO(3))$, with
$(\widetilde{\phi}, \widetilde{R})=(\phi, R)$ on $\partial\Omega$.

The existence of minimizers of ${\rm{Coss}}(\phi, R)$ in the Sobolev spaces, under the Dirichlet boundary condition, has been obtained by Neff \cite{Neff2}.  By direct calculations, any minimizer
$(\phi, R)$ of ${\rm{Coss}}(\phi, R)$ solves the Euler-Lagrange equation, called as the Cosserat equation: 
\begin{equation}\label{Coss1}
\begin{cases} {\rm{div}}\big(R\mathbb P^t\mathbb P(R^t\nabla\phi-I_3)\big)=\frac12 f,\\
\Big\{{\rm{div}}(|\nabla R|^{p-2}\nabla R)-\frac2p\nabla\phi\big(\mathbb P^t\mathbb P(R^t\nabla\phi-I_3)\big)^t-\frac1p M\Big\}\perp T_{R}SO(3).
\end{cases}
\end{equation}
Here $T_{R}SO(3)$ denotes the tangent space of $SO(3)$, at $R\in SO(3)$, that is given by
$$T_RSO(3)=\Big\{ Q\in \mathbb{R}^{3\times 3}\ \big|\  R^t Q+Q^t R=0\Big\},$$
and $\mathbb{P}^t:
\mathbb{R}^{3\times 3}\to \mathbb{R}^{3\times 3}$ is the adjoint map of $\mathbb{P}$.

When $\mu_1=\mu_2=\mu_c=1$, we have that $\mathbb P=\mathbb P^t={\rm{Id}}$ is the identity map.
Hence $$|\mathbb{P}(R^t\nabla \phi-I_3)|^2=|\nabla\phi|^2-2\langle R, \nabla\phi\rangle+3,$$ 
and the Cosserat equation \eqref{Coss1} reduces to the following simplified form:
\begin{equation}\label{Coss2}
\begin{cases} \Delta \phi={\rm{div}} R+\frac12f,\\
\Big({\rm{div}}(|\nabla R|^{p-2}\nabla R)+\frac2p\nabla\phi-\frac{1}{p} M\Big)\perp T_{R}SO(3).
\end{cases}
\end{equation}

We would like to remark that  the system \eqref{Coss1} and \eqref{Coss2} are systems coupling between the Poisson equation
for the macroscopic translational deformation variable $\phi:\Omega\to\mathbb R^3$ and the (nonlinear) $p$-harmonic map equation for the microscopic rotational deformation variable $R:\Omega\to SO(3)$.

By extending the techniques in the study of minimizing $p$-harmonic maps by Schoen-Uhlenbeck \cite{SU1983},
Hardt-Lin \cite{HL1986}, Fuchs \cite{Fuchs},
and especially Luckhaus \cite{Luckhaus}, Gastel has recently shown in an interesting article
\cite{Gastel2018} that any minimizer $(\phi, R)\in H^1(\Omega, \mathbb R^3)\times W^{1,p}(\Omega, SO(3))$ 
of the Cosserat energy functional ${\rm{Coss}}(\phi, R)$ of the Cosserat functional \eqref{Coss}  belongs to $C^{1,\alpha}\times C^\alpha$ in $\Omega$ away from a singular set $\Sigma$ of isolated points for all $2\le p<3$. Moreover, $\Sigma$ is shown to be an empty set
when $p\in [2, \frac{32}{15}]$ by extending
stability inequality arguments by Schoen-Uhlenbeck \cite{SU1984}, Xin-Yang \cite{XY1995},
and Chang-Chen-Wei \cite{CCW2016}.  

An interesting question to ask is whether the regularity result on minimizers of the Cosserat functional 
in \cite{Gastel2018} remains to hold for certain classes of weak solutions to the Cosserat equation
\eqref{Coss1}. In this
paper, we will answer this question affirmatively. To address it, we first need to introduce a few definitions.

For $1\le p<\infty$, recall the Sobolev space
$$W^{1,p}\big(\Omega, SO(3)\big)=\Big\{R\in W^{1,p}(\Omega, \mathbb{R}^{3\times 3})\ \big|
\ R(x)\in SO(3), \ {\rm{a.e.}}\ x\in\Omega\Big\}.
$$
\begin{definition} For $2\le p\le 3$, given $f\in H^{-1}(\Omega, \mathbb R^3)$ and
$M\in W^{-1, \frac{p}{p-1}}(\Omega, \mathbb{R}^{3\times 3})$,
a pair of maps $(\phi, R)\in H^1(\Omega,\mathbb R^3)\times W^{1,p}(\Omega, SO(3))$
is a weak solution to the Cosserat equation \eqref{Coss1}, if it satisfies \eqref{Coss1}
in the sense of distributions, i.e.,
\begin{equation*}
\begin{cases}
\displaystyle\int_\Omega (\langle \mathbb{P}(R^t\nabla\phi-I_3),\mathbb{P}R^t\nabla\psi_1\rangle+\frac12\langle f, \psi_1\rangle)\,dx=0,\\
\displaystyle\int_\Omega\big(\langle |\nabla R|^{p-2}\nabla R,\nabla\psi_2\rangle
+\frac{2}{p}\langle \mathbb{P}(R^t\nabla\phi-I_3), \mathbb{P}\psi_2^t\nabla\phi\rangle+\frac{1}{p}
\langle M, \psi_2\rangle\big)\,dx=0,
\end{cases}
\end{equation*}
hold for any $\psi_1\in H^1_{0}(\Omega,\mathbb{R}^3)$ and
$\psi_2\in W^{1,p}_0(\Omega, T_RSO(3))\cap L^\infty(\Omega, \mathbb{R}^{3\times 3})$. 

\end{definition} 
It is readily seen that any minimizer $(\phi, R)$ of the Cosserat energy functional \eqref{Coss}
is a weak solution of the Cosserat equation \eqref{Coss1}. A restricted class of weak solutions
of \eqref{Coss1} is the class of stationary weak solutions, which is  defined as follows.

\begin{definition} For $2\le p\le 3$, $f\in H^{-1}(\Omega, \mathbb R^3)$, and
$M\in W^{-1, \frac{p}{p-1}}(\Omega, \mathbb{R}^{3\times 3})$,
a weak solution $(\phi, R)\in H^1(\Omega,\mathbb R^3)\times W^{1,p}(\Omega, SO(3))$
to the Cosserat equation \eqref{Coss1} is called a stationary weak solution, if, in addition, $(\phi, R)$ is a critical 
point of the Cosserat energy functional \eqref{Coss} with respect to the domain variations, i.e,
\begin{equation}\label{Coss-Stat}
\frac{d}{dt}\Big|_{t=0} {\rm{Coss}}(\phi_t, R_t)=0,
\end{equation}
where $(\phi_t(x), R_t(x))=(\phi(x+t Y(x)), R(x+t Y(x)))$ for $x\in\Omega$, 
and $Y\in C_0^\infty(\Omega,\mathbb{R}^3)$.
\end{definition}
It is easy to check that any minimizer $(\phi, R)$ of the Cosserat energy functional \eqref{Coss}
is a stationary weak solution of the Cosserat equation \eqref{Coss1}. It can also be shown by a Pohozaev argument
that any regular solution $(\phi, R)\in C^{1,\alpha}(\Omega, \mathbb R^3\times SO(3))$ of the Cosserat equation \eqref{Coss1} is a stationary weak solution.

In section 2 below, we will show that when $\mu_1=\mu_c=\mu_2=1$, 
any stationary weak solution $(\phi, R)$ of Cosserat equation 
\eqref{Coss2} satisfies the following stationarity identity: for any $Y\in C_0^\infty(\Omega,\mathbb R^3)$, 
it holds that
\begin{eqnarray}\label{Coss-Stat1}
&&\int_\Omega \big(|\nabla\phi|^2-2\langle R,\nabla\phi\rangle +|\nabla R|^p\big) (-{\rm{div}} Y)\,dx
+\int_\Omega (\langle f, Y\cdot\nabla\phi\rangle+\langle M, Y\cdot\nabla R\rangle)\,dx\nonumber\\
&&+\int_\Omega\big(2\nabla\phi\otimes\nabla \phi: \nabla Y-2 R_{ij}\frac{\partial\phi^i}{\partial x_k} \frac{\partial Y^k}{\partial x_j}+p|\nabla R|^{p-2}\nabla R\otimes\nabla R:\nabla Y\big)\,dx=0.
\end{eqnarray}
As a direct consequence of \eqref{Coss-Stat1}, we will establish an almost energy monotonicity
inequality for stationary weak solutions to \eqref{Coss2} when $\mu_1=\mu_c=\mu_2=1$ holds. This, combined
with the symmetry of $SO(3)$, enables us to extend the compensated regularity technique by H\'elein 
\cite{Helein}, Evans \cite{Evans}, and Toro-Wang \cite{TW} to show the  following partial regularity.

\begin{theorem} \label{reg1} For $2\le p<3$, $f\in L^\infty(\Omega, \mathbb R^3)$ 
and $M\in L^\infty(\Omega, \mathbb{R}^{3\times 3})$,
if $(\phi, R)\in H^1(\Omega,\mathbb R^3)\times W^{1,p}(\Omega, SO(3))$
is a stationary weak solution to the Cosserat equation \eqref{Coss2},
then there exist $\alpha\in (0,1)$ and a closed set $\Sigma\subset\Omega$,
whose $(3-p)$-dimensional Hausdorff measure $H^{3-p}(\Sigma)=0$, such that
$(\phi, R)\in C^{1,\alpha}(\Omega\setminus\Sigma,\mathbb R^3)\times C^\alpha(\Omega\setminus\Sigma,
SO(3))$.
Furthermore, $\Sigma$ is a discrete set when $p\in (2,3)$.
\end{theorem}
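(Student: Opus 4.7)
The approach follows the standard three-step scheme for partial regularity of stationary critical points of conformally critical functionals: (i) derive an almost-monotonicity formula from the stationarity identity \eqref{Coss-Stat1}; (ii) establish an $\varepsilon$-regularity theorem exploiting the Lie group structure of $SO(3)$; and (iii) obtain the Hausdorff measure bound by a Vitali-type covering argument, then promote it to discreteness via Federer's dimension reduction.

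For step (i) I would insert the radial vector field $Y(x) = \eta(|x - x_0|)(x - x_0)$, with $\eta$ a smooth non-negative cutoff, into \eqref{Coss-Stat1}. After isolating the principal scaling term and absorbing the contributions involving $f, M \in L^\infty$ using Young's inequality, this produces an almost-monotonicity inequality for the normalized energy
\[
\Phi(x_0, r) := r^{p-3}\int_{B_r(x_0)}\bigl(|\nabla\phi|^2 + |\nabla R|^p\bigr)\,dx,
\]
of the form $\Phi(x_0, r_1) \le C\,\Phi(x_0, r_2) + C r_2^{\gamma}$ for some $\gamma > 0$ and all $0 < r_1 \le r_2 < \mathrm{dist}(x_0, \partial\Omega)/2$. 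This is the key ingredient for the subsequent compactness arguments.

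The heart of the proof, and the step I expect to be the main obstacle, is the $\varepsilon$-regularity assertion in step (ii): there exists $\varepsilon_0 > 0$ such that $\Phi(x_0, r) \le \varepsilon_0$ for some small $r$ forces $(\phi, R) \in C^{1,\alpha} \times C^\alpha$ on $B_{r/4}(x_0)$. The difficulty is that the equation for $R$ is conformally critical in dimension $3$ when $p = 2$, and degenerate/singular when $p > 2$; naive estimation of the quadratic right-hand side fails. The structural observation that saves the argument is that differentiating $R^t R = I_3$ shows $R^t\nabla R$ is $\mathfrak{so}(3)$-valued, so the projection of the forcing onto $T_R SO(3)$ can be rewritten with an antisymmetric connection. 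For $p = 2$ the resulting quadratic nonlinearity lies in the local Hardy space $\mathcal{H}^1$ by the Coifman-Lions-Meyer-Semmes theorem, yielding a Morrey-type decay under small energy as in the moving-frame arguments of H\'elein, Evans, and Toro-Wang. For $p \in (2, 3)$ I would combine this compensated-compactness input with a hole-filling iteration for the degenerate $p$-Laplace system in the Duzaar-Mingione framework to obtain a one-step decay $\Phi(x_0, \theta r) \le \tfrac{1}{2}\Phi(x_0, r) + C r^{\gamma}$ for some $\theta \in (0, 1)$. Iteration gives $R \in C^\alpha$, after which $\Delta\phi = \mathrm{div}\,R + \tfrac{1}{2}f$ becomes a linear Poisson equation with $C^\alpha$ right-hand side, so $\phi \in C^{1,\alpha}_{\mathrm{loc}}$ by classical Calder\'on-Zygmund and Schauder theory.

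For step (iii) I set $\Sigma := \{x_0 \in \Omega : \liminf_{r \to 0^+}\Phi(x_0, r) \ge \varepsilon_0\}$, which is closed by the almost-monotonicity, and on $\Omega \setminus \Sigma$ the $\varepsilon$-regularity gives the claimed regularity. A standard Vitali covering with balls in which the rescaled energy exceeds $\varepsilon_0$ then yields $H^{3-p}(\Sigma) = 0$. To promote this to discreteness when $p \in (2, 3)$, I would invoke Federer's dimension reduction: blow-ups at $x_0 \in \Sigma$, obtained as limits of $R_{r_i}(y) := R(x_0 + r_i y)$ and suitably rescaled $\phi_{r_i}$, converge strongly in $W^{1,p}_{\mathrm{loc}}$ off a concentration set by the $\varepsilon$-regularity and the monotonicity, and produce a $0$-homogeneous stationary weak solution of the source-free Cosserat system on $\mathbb R^3$, whose macroscopic part is linear. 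Since $\dim_{\mathcal H}\Sigma \le 3 - p < 1$ when $p > 2$, the reduction principle forces iterated tangent maps to eventually have singular set reduced to $\{0\}$; any accumulation point would instead produce a tangent map with a singular ray, yielding a contradiction with the dimension bound. Hence $\Sigma$ has no accumulation points in $\Omega$ and is therefore discrete.
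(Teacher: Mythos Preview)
Your outline for steps (i)--(ii) is broadly in line with the paper's approach, though the paper executes the $\varepsilon$-regularity step (ii) by a blow-up/contradiction argument rather than a direct hole-filling iteration: one assumes the decay \eqref{decay2} fails, rescales and normalizes so that the blow-up sequence $(\widehat\phi_k,\widehat R_k)$ has unit energy on $B_1$, and then proves \emph{strong} $H^1\times W^{1,p}$ convergence to a harmonic function/$p$-harmonic function pair on $B_{1/4}$. The strong convergence is obtained by combining the monotonicity inequality (which yields a uniform BMO bound on $\eta\widehat R_k$) with the div-curl structure of \eqref{Coss3.6} and $\mathcal H^1$--BMO duality, exactly as in Evans and Toro--Wang. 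Your Duzaar--Mingione/hole-filling route is a reasonable alternative, but you should be aware that the paper's mechanism for handling the borderline nonlinearity is the Hardy--BMO pairing, not a direct Morrey estimate.

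The genuine gap is in step (iii), the discreteness for $2<p<3$. You invoke ``Federer's dimension reduction'' and assert that blow-ups at an accumulation point of $\Sigma$ produce a $0$-homogeneous tangent map with a singular ray. But Federer reduction relies on \emph{strong} $W^{1,p}$ compactness of the blow-up sequence, which is automatic for minimizers but is \emph{not} available a priori for merely stationary weak solutions: energy may concentrate on a defect measure $\nu$, and then the weak limit $R_\infty$ can be smooth even though the original sequence has singularities accumulating. Your sentence ``converge strongly in $W^{1,p}_{\rm loc}$ off a concentration set'' concedes exactly this, and the rest of the argument does not explain why the concentration set is empty. The paper closes this gap with Marstrand's theorem: one shows that the defect measure $\nu$ satisfies $0<\Theta^{3-p}(\nu,x)<\infty$ for $H^{3-p}$-a.e.\ $x$ in the concentration set $\boldsymbol S$, and Marstrand's density theorem then forces $3-p$ to be an integer, which is impossible for $p\in(2,3)$. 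Hence $\nu\equiv 0$, the convergence is strong, and only then does the tangent-map argument go through. Without this (or an equivalent device to rule out concentration), your discreteness argument is incomplete.
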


We would like to point out that the discreteness of singular set $\Sigma$ for $2<p<3$ is a corollary
of $H^1\times W^{1,p}$-compactness property of weakly convergent stationary weak solutions of the Cosserat
equation \eqref{Coss2}, which is a consequence of monotonicity inequality \eqref{mono_ineq2}
and the Marstrand Theorem (see \cite {Marstrand}).

To further improve the estimate of the singular set $\Sigma$ for a stationary weak solution $(\phi, R)$
of the Cosserat equation \eqref{Coss1} both for $p=2$ and $2<p<3$, we restrict our attention
to a subclass of stationary weak solutions that are stable. 

\begin{definition} For $2\le p<3$, $f\in H^{-1}(\Omega, \mathbb R^3)$, and
$M\in W^{-1, \frac{p}{p-1}}(\Omega, \mathbb{R}^{3\times 3})$,
a weak solution $(\phi, R)\in H^1(\Omega,\mathbb R^3)\times W^{1,p}(\Omega, SO(3))$
to the Cosserat equation \eqref{Coss1} is called a stable weak solution, if, in addition,
the second order variation of the Cosserat energy functional at $(\phi, R)$ is nonnegative, i.e.,
\begin{equation}\label{Coss-Stable}
\frac{d^2}{dt^2}\Big|_{t=0} {\rm{Coss}}(\phi_t, R_t)\ge 0,
\end{equation}
where $(\phi_t, R_t)\in C^2\big((-\delta, \delta), H^1(\Omega,\mathbb R^3)\times W^{1,p}(\Omega, SO(3))\big)$
for some $\delta>0$,
satisfying $(\phi_0, R_0)=(\phi, R)$,  is a variation of $(\phi, R)$ in the target space $\mathbb R^3\times SO(3)$.
\end{definition}

From the definition, any minimizer $(\phi, R)\in H^1(\Omega,\mathbb R^3)\times W^{1,p}(\Omega, SO(3))$
of the Cosserat energy functional ${\rm{Coss}}(\cdot, \cdot)$ is a stable weak solution of the Cosserat
equation \eqref{Coss1}. 

In section 3, we will establish in the stability Lemma 3.2  that any stable weak solutions $(\phi, R)$ of
Cosserat equation satisfies the following stability inequality: 
\begin{equation}\label{stable0}
\int_\Omega \big((p+1)|\nabla R|^{p-2}|\nabla \psi|^2-2|\nabla R|^p|\psi|^2
\big)\,dx\ge 0, \ \forall\psi\in C^\infty_0(\Omega).
\end{equation}

Utilizing the stability inequality \eqref{stable0}, we can extend the ideas by Hong-Wang \cite{HW1999} and Lin-Wang \cite{LW2006} to establish a pre-compactness property of stable-stationary weak solutions of the Cosserat equation for $p=2$, which can be employed to improve the estimate of singular set $\Sigma$. Moreover, by applying the non-existence theorem
on stable $p$-harmonic maps from $\mathbb S^2$ to $SO(3)$ for $p\in [2,\frac{32}{15}]$ that was established
by Schoen-Uhlenbeck \cite{SU1984}, Xin-Yang \cite{XY1995},
and Chang-Chen-Wei \cite{CCW2016}, we prove a complete regularity result for stable stationary weak solutions
to the Cosserat equation \eqref{Coss2} when $p$ belongs to the range $[2, \frac{32}{15}]$. More precisely,
we have
\begin{theorem}\label{reg2} For $p\in [2,\frac{32}{15}]$, 
$f\in L^\infty(\overline\Omega, \mathbb R^3)$, and
$M\in L^\infty(\overline\Omega, \mathbb{R}^{3\times 3})$,
if $(\phi, R)\in H^1(\Omega,\mathbb R^3)\times W^{1,p}(\Omega, SO(3))$
is a stable stationary weak solution to the Cosserat equation \eqref{Coss2},
then there exists $\alpha\in (0,1)$  such that
$(\phi, R)\in C^{1,\alpha}(\Omega,\mathbb R^3)\times C^\alpha(\Omega, SO(3))$.
\end{theorem}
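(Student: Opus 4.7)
The plan is a blow-up/tangent-map argument: starting from a hypothetical singular point $x_0\in\Sigma$ of the stable stationary weak solution $(\phi,R)$, I would extract a tangent map $R_*$, observe that its restriction to $\mathbb{S}^2$ is a stable $p$-harmonic map into $SO(3)$, and invoke the non-existence theorem of Schoen--Uhlenbeck \cite{SU1984}, Xin--Yang \cite{XY1995}, and Chang--Chen--Wei \cite{CCW2016} for $p\in[2,\tfrac{32}{15}]$ to conclude $R_*$ is constant. The vanishing of the rescaled $p$-energy then activates the $\varepsilon$-regularity criterion contained in the proof of Theorem \ref{reg1}, contradicting singularity at $x_0$. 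Once $R\in C^{\alpha}$ is established, the first equation in \eqref{Coss2} is a Poisson equation for $\phi$ with H\"older right-hand side, so $\phi\in C^{1,\alpha}$ follows from standard Calder\'on--Zygmund theory; all the real work therefore concentrates on $R$.

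To initiate the blow-up, I would use Theorem \ref{reg1}: $\Sigma$ is already discrete for $p\in(2,\tfrac{32}{15}]$ and satisfies $H^1(\Sigma)=0$ for $p=2$. Assuming for contradiction that $x_0\in\Sigma$, define the rescaled pair
\[
R_r(x)=R(x_0+rx),\qquad \phi_r(x)=\lambda_r\bigl(\phi(x_0+rx)-\overline{\phi}_{x_0,r}\bigr),
\]
on $B_1$, with $\lambda_r$ chosen so that $\int_{B_1}|\nabla\phi_r|^2\le C$; the almost monotonicity inequality derived from \eqref{Coss-Stat1} then yields $\int_{B_1}|\nabla R_r|^p\le C$ uniformly in $r$, and the $L^\infty$ data $f,M$ contribute only terms of order $O(r)$ that vanish as $r\to 0$. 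The critical input from Section 3 is that the stability inequality \eqref{stable0}, combined with the method of Hong--Wang \cite{HW1999} and Lin--Wang \cite{LW2006}, yields strong $W^{1,p}_{\mathrm{loc}}$-precompactness of $\{R_r\}$. Along a subsequence $r_k\downarrow 0$ I obtain a limit $R_*$; strong convergence is indispensable to pass both the stationarity identity and \eqref{stable0} to $R_*$, since both contain integrands (notably $|\nabla R|^{p-2}|\nabla\psi|^2$ and $|\nabla R|^p|\psi|^2$) that are continuous in the strong topology but not the weak.

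The limit $R_*$ inherits the full monotonicity identity, which forces it to be $0$-homogeneous: $R_*(x)=\omega(x/|x|)$ for some $\omega\in W^{1,p}(\mathbb{S}^2,SO(3))$. Because $\lambda_r^{-1}\to 0$, the coupling term involving $\nabla\phi$ drops out of the $R$-equation in the limit, so $\omega$ is a weakly $p$-harmonic map $\mathbb{S}^2\to SO(3)$. Integrating \eqref{stable0} in polar coordinates against test functions of the form $\psi(|x|)\eta(x/|x|)$ shows that the stability of the cone $R_*$ on $\mathbb{R}^3$ is equivalent to the stability of $\omega$ as a $p$-harmonic map on $\mathbb{S}^2$. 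The non-existence theorem then makes $\omega$, and hence $R_*$, constant; together with the strong convergence this forces $r_k^{p-3}\int_{B_{r_k}(x_0)}|\nabla R|^p\to 0$, which triggers $\varepsilon$-regularity in a small ball around $x_0$. This contradicts $x_0\in\Sigma$, so $\Sigma=\emptyset$.

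The principal obstacle is the strong $W^{1,p}$-precompactness of stable stationary sequences, particularly at the borderline $p=2$ where almost monotonicity alone cannot exclude concentration of Dirichlet energy; here the stability inequality \eqref{stable0} must be used in the manner of \cite{HW1999, LW2006} to rule out defect measures, and without it the tangent map would not inherit stability and the non-existence theorem would not apply. A secondary technical point is ensuring that the translational variable $\phi$ plays no role in the blow-up limit: its gradient enters the $R$-equation quadratically, and the $L^\infty$ control on $f,M$ together with the choice of $\lambda_r$ from the almost monotonicity bound makes these contributions subcritical under the rescaling, hence vanishing as $r\to 0$.
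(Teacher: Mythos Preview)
Your proposal is correct and follows essentially the same strategy as the paper: blow up at a hypothetical singular point, obtain a homogeneous-degree-zero tangent $p$-harmonic map into $SO(3)$ via strong $W^{1,p}$-convergence, pass the stability inequality to the limit, and invoke the non-existence results of \cite{SU1984, XY1995, CCW2016} (packaged in \cite{Gastel2018} for $p>2$) to reach a contradiction.

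The only organizational difference is that the paper splits the argument into two cases. For $p\in(2,\tfrac{32}{15}]$ the singular set is already discrete by Theorem~\ref{reg1}, and the strong convergence of the blow-up sequence is obtained \emph{without} stability, directly from the Marstrand argument (Claim~1 in the proof of Theorem~\ref{reg1}); stability is used only at the very end to rule out the tangent cone. For $p=2$ the paper does exactly what you describe: the Hong--Wang/Lin--Wang capacity argument, fed by the stability inequality \eqref{stable0} in the form $\int|\nabla R|^2\psi^2\le\tfrac32\int|\nabla\psi|^2$, kills the defect measure. Your unified presentation via stability-based compactness for all $p$ is fine in spirit, but note that the inequality \eqref{stable0} for $p>2$ has the factor $|\nabla R|^{p-2}$ in front of $|\nabla\psi|^2$, which complicates the capacity step; the paper sidesteps this by using Marstrand instead when $p>2$, which is cleaner.
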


Now we would like to mention a couple of questions.

\begin{remark} {\rm{1)}}. It remains to be an open question whether Theorem \ref{reg2} remains to be true
when $\frac{32}{15}<p<3$. The main difficulty arises from that we can't rule out the existence of
nontrivial stable $p$-harmonic maps from $\mathbb S^2$ to $SO(3)$ when  $p$ lies in the interval
$(\frac{32}{15}, 3)$.  \\
{\rm{2)}}. It remains to be open whether Theorem \ref{reg1} and Theorem \ref{reg2} hold true 
when the positive constants $\mu_1, \mu_c, \mu_2$ are not necessarily equal. The main difficulty
is that it is unknown whether an almost energy monotonicity inequality holds for stationary weak
solutions $(\phi, R)$ of the Cosserat equation \eqref{Coss1} when $\mathbb P$ is not an identity 
map. 
\end{remark}

The paper is organized as follows. In section 2, we will derive both stationarity identity and an almost energy
monotonicity inequality  for stationary weak solutions $(\phi, R)$ of the Cosserat equation \eqref{Coss1}.
In section 3, we will rewrite the Cosserat equation \eqref{Coss2} into a form in which the nonlinearity exhibits div-curl
structures. In section 4, we will prove an $\epsilon_0$-regularity theorem for stationary weak solutions $(\phi, R)$ of the Cosserat equation \eqref{Coss2}, and apply Marstrand's theorem to obtain a 
refined estimate of the singular set when $2<p<3$. In section 5, we will derive the stability inequality for 
stable weak solutions and obtain the full regularity for stable stationary weak solutions $(\phi, R)$ of the Cosserat equation \eqref{Coss2} when $p\in [2, \frac{32}{15}]$.

\section{Stationarity identity and almost monotonicity inequality}
\setcounter{equation}{0}
\setcounter{theorem}{0}

This section is devoted to the derivation of stationarity identity and almost energy monotonicity
inequality for stationary weak solutions to the Cosserat equation \eqref{Coss2}.

\begin{lemma} \label{stat-id1} For $2\le p<3$, assume $\mu_1=\mu_c=\mu_2=1$,
$f\in L^2(\Omega,\mathbb R^3)$, and $M\in L^{\frac{p}{p-1}}(\Omega,SO(3))$.
If $(\phi, R)\in H^1(\Omega,\mathbb R^3)\times W^{1,p}(\Omega,SO(3))$ is a stationary weak solution of the Cosserat equation \eqref{Coss2}, then for any $Y\in C_0^\infty(\Omega,\mathbb R^3)$ it holds that
\begin{eqnarray}\label{stat-id2}
&&\int_{\Omega}\Big( 2\nabla\phi\otimes\nabla\phi: \nabla Y+p|\nabla R|^{p-2}\nabla R\otimes\nabla R:\nabla Y
-2R^{ik} \frac{\partial\phi^k}{\partial x_j} \frac{\partial Y^j}{\partial x_i}\Big)\,dx\\
&&=\int_{\Omega} \big[\big(|\nabla\phi|^2-2\langle R, \nabla\phi\rangle+|\nabla R|^p\big){\rm{div}}Y-
\langle Y(x)\cdot\nabla\phi,  f\rangle-\langle Y(x)\cdot\nabla R,  M\rangle\big]dx.\nonumber
\end{eqnarray}
\end{lemma}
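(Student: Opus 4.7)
The plan is to derive \eqref{stat-id2} directly from the stationarity condition \eqref{Coss-Stat} by carrying out the domain-variation computation. Since $Y\in C_0^\infty(\Omega,\R^3)$, for $|t|$ sufficiently small the map $\Phi_t(x):=x+tY(x)$ is a smooth diffeomorphism of $\Omega$ onto itself, with $D\Phi_t(x)=I+t\nabla Y(x)$ and $\det D\Phi_t(x)=1+t\,\di Y(x)+O(t^2)$.

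I would then change variables $y=\Phi_t(x)$ in each term of ${\rm Coss}(\phi_t,R_t)$. The chain rule gives $\pa_j\phi_t^i(x)=\pa_l\phi^i(y)(\delta_{jl}+t\,\pa_jY^l(x))$, together with the analogous formula for $\nabla R_t$. Expanding to first order in $t$ and absorbing the Jacobian factor $(\det D\Phi_t)^{-1}=1-t\,\di Y+O(t^2)$ into $dx=(\det D\Phi_t)^{-1}dy$, the derivative at $t=0$ contributes: from $\int|\nabla\phi_t|^2\,dx$ the term $\int(2\nabla\phi\otimes\nabla\phi:\nabla Y-|\nabla\phi|^2\,\di Y)\,dx$; from $\int|\nabla R_t|^p\,dx$ the term $\int(p|\nabla R|^{p-2}\nabla R\otimes\nabla R:\nabla Y-|\nabla R|^p\,\di Y)\,dx$; and from $\int -2\langle R_t,\nabla\phi_t\rangle\,dx$ (in which $R_t$ does not acquire a $D\Phi_t$ factor while $\nabla\phi_t$ does) the term $\int(-2R^{ik}\pa_j\phi^k\pa_iY^j+2\langle R,\nabla\phi\rangle\,\di Y)\,dx$. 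The additive constant $3$ appearing in $|\mathbb{P}(R^t\nabla\phi-I_3)|^2=|\nabla\phi|^2-2\langle R,\nabla\phi\rangle+3$ contributes nothing.

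For the external-force terms, $f$ and $M$ are evaluated at $x$ and are not pulled back, so the $t$-derivative acts only on $\phi(x+tY(x))$ and $R(x+tY(x))$, producing $\int\langle Y\cdot\nabla\phi,f\rangle\,dx$ and $\int\langle Y\cdot\nabla R,M\rangle\,dx$. Summing all five contributions, setting the total equal to zero via \eqref{Coss-Stat}, and moving the $\di Y$ and forcing pieces to the right-hand side reproduces \eqref{stat-id2} exactly.

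The main technical obstacle is justifying differentiation under the integral sign at the available regularity ($\phi\in H^1$, $R\in W^{1,p}$). Since $\{\Phi_t\}_{|t|<\delta}$ is a family of smooth diffeomorphisms with uniform bounds on $D\Phi_t^{\pm 1}$, pullback preserves $H^1$ and $W^{1,p}$ norms, the first-order expansions above hold almost everywhere, and the $t$-difference quotients are dominated by integrable majorants (using $f\in L^2$, $M\in L^{p/(p-1)}$, and the finite Dirichlet energies $\int|\nabla\phi|^2$ and $\int|\nabla R|^p$). Dominated convergence then legitimizes passing the derivative through the integral; alternatively, one may approximate $(\phi,R)$ in $H^1\times W^{1,p}$ by smooth maps, perform the calculation classically, and pass to the limit. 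Everything else is routine index bookkeeping from the chain rule and a matrix Taylor expansion.
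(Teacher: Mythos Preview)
Your proposal is correct and follows the same approach as the paper: both argue by setting $(\phi_t,R_t)(x)=(\phi,R)(x+tY(x))$, invoking the stationarity hypothesis \eqref{Coss-Stat}, and computing the $t$-derivative via the change of variables $y=x+tY(x)$. The paper's proof is terse (it simply says ``applying change of variables and direct calculations''), whereas you spell out the contribution of each term of the energy and additionally address the justification of differentiation under the integral sign, which the paper omits.
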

\begin{proof} For $Y\in C_0^\infty(\Omega,\mathbb R^3)$, there is a sufficiently 
small $\delta>0$ so that ${\rm{dist}}({\rm{supp}}(Y), \partial\Omega)>\delta$. Define
$(\phi_t, R_t)(x)=(\phi,R)(x+tY(x))$ for $x\in\Omega$ and $t\in (-\delta,\delta)$.
Since $(\phi, R)$ is a stationary weak solution of \eqref{Coss2}, we have that 
$$0=\frac{d}{dt}\big|_{t=0} \int_\Omega \big(|\nabla\phi_t|^2-2\langle R_t, \nabla\phi_t\rangle+|\nabla R_t|^p
+\langle \phi_t-x, f\rangle +\langle R_t, M\rangle\big)\,dx.
$$
Applying change of variables and direct calculations, it is not hard to see that 
\begin{eqnarray}
&&\int_\Omega \big(|\nabla\phi|^2-2\langle R,\nabla\phi\rangle +|\nabla R|^p\big) (-{\rm{div}} Y)\,dx
+\int_\Omega \big(\langle Y\cdot\nabla\phi, f\rangle+\langle Y\cdot\nabla R, M\rangle\big)\,dx\nonumber\\
&&+\int_\Omega\big(2\nabla\phi\otimes\nabla \phi: \nabla Y-2 R_{\alpha\beta}\frac{\partial\phi^\beta}{\partial x_\gamma} \frac{\partial Y^\gamma}{\partial x_\alpha}+p|\nabla R|^{p-2}\nabla R\otimes\nabla R:\nabla Y\big)\,dx=0.
\end{eqnarray}
This yields \eqref{stat-id2}.  
\end{proof}

\medskip
By choosing suitable test variation fields $Y\in C_0^\infty(\Omega,\mathbb R^3)$, we will obtain an
almost energy monotonicity inequality for stationary weak solutions to the Cosserat equation \eqref{Coss2}. 

\begin{corollary}\label{mono-ineq1} 
For $2\le p<3$, assume $\mu_1=\mu_c=\mu_2=1$, $f\in L^\infty(\Omega,\mathbb R^3)$
and $M\in L^\infty(\Omega,SO(3))$.
If $(\phi, R)\in H^1(\Omega,\mathbb R^3)\times W^{1,p}(\Omega,SO(3))$ is a stationary weak solution of the Cosserat equation \eqref{Coss2}, then for any $x\in \Omega$ and $0<r_1\le r_2<{\rm{dist}}(x,\partial\Omega)$,
it holds that
\begin{eqnarray}\label{mono_ineq2}
&&{\rm{Coss}}_x((\phi, R), r_1)+\int_{r_1}^{r_2} r^{p-3}\int_{\partial B_r} \big(p |\nabla R|^{p-2}|\frac{\partial R}{\partial r}|^2+ |\frac{\partial\phi}{\partial r}|^2\big)\,dH^2dr\nonumber\\
&&\le {\rm{Coss}}_x((\phi, R), r_2),
\end{eqnarray}
where ${\rm{Coss}}_x((\phi, R), r)$ is the modified renormalized Cosserat energy defined by
\begin{equation}\label{Coss2.0}
{\rm{Coss}}_x((\phi,R),r):= e^{Cr^2} {r}^{p-3}\int_{B_{r}(x)}\big(|\nabla R|^p+ |\nabla\phi|^2\big)\,dx+Cr^{3},
\end{equation}
where $C>0$ depends on $p$, $\|f\|_{L^\infty(\Omega)}$, and $\|M\|_{L^\infty(\Omega)}$.
\end{corollary}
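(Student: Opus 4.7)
The plan is to derive a pointwise (in the radius) differential inequality for the rescaled energy by inserting a one-parameter family of radial test vector fields into the stationarity identity \eqref{stat-id2} of Lemma 2.1, and then to integrate it over $[r_1,r_2]$. Fix $x\in\Omega$ and $0<s<\mathrm{dist}(x,\partial\Omega)$. For small $\varepsilon>0$ I would take
\[
Y_\varepsilon(y)=\eta_\varepsilon(|y-x|)\,(y-x),\qquad \eta_\varepsilon\in C_c^\infty([0,\infty)),
\]
where $\eta_\varepsilon\to\chi_{[0,s]}$ as $\varepsilon\to 0$. Writing $r=|y-x|$ and $\rho^i=(y^i-x^i)/r$, one computes
\[
\partial_i Y_\varepsilon^{\,j}=\eta_\varepsilon(r)\delta_{ij}+\eta_\varepsilon'(r)\,r\,\rho^i\rho^j,\qquad \mathrm{div}\,Y_\varepsilon=3\eta_\varepsilon(r)+r\eta_\varepsilon'(r),
\]
so that
\[
\nabla\phi\otimes\nabla\phi:\nabla Y_\varepsilon=\eta_\varepsilon|\nabla\phi|^2+\eta_\varepsilon'\,r\,|\partial_r\phi|^2,
\]
and analogously for the term involving $|\nabla R|^{p-2}\nabla R\otimes\nabla R$.

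Substituting $Y_\varepsilon$ into \eqref{stat-id2} and passing to the limit $\varepsilon\to 0$ yields, for a.e.\ $s$, an identity of the form
\[
(3-p)\int_{B_s(x)}\!\!\bigl(|\nabla\phi|^2+|\nabla R|^p\bigr)dy \;=\; s\!\int_{\partial B_s(x)}\!\!\bigl(2|\partial_r\phi|^2+p|\nabla R|^{p-2}|\partial_r R|^2\bigr)dH^2 - s\!\int_{\partial B_s(x)}\!\!\bigl(|\nabla\phi|^2+|\nabla R|^p\bigr)dH^2 + \mathcal{E}(s),
\]
where $\mathcal{E}(s)$ collects (i) the coupling contributions $-2\langle R,\nabla\phi\rangle\,\mathrm{div}\,Y_\varepsilon$ and $-2R^{ik}\partial_j\phi^k\partial_i Y_\varepsilon^{\,j}$, and (ii) the forcing contributions involving $f$ and $M$. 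Since $R\in SO(3)$ we have $|R|\le\sqrt{3}$, so the terms in (i) are affine in $\nabla\phi$ and by Cauchy–Schwarz are bounded by $Cs^2 \int_{B_s}(|\nabla\phi|^2+|\nabla R|^p)dy+Cs^3$; the terms in (ii) are bounded by $C(\|f\|_\infty+\|M\|_\infty)s\int_{B_s}(|\nabla\phi|+|\nabla R|)dy\le Cs\int_{B_s}(|\nabla\phi|^2+|\nabla R|^p)dy+Cs^3$ after Young's inequality (using $p\ge 2$ and $|B_s|\le Cs^3$).

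Writing $E(s):=\int_{B_s(x)}(|\nabla\phi|^2+|\nabla R|^p)dy$, the identity rearranges to
\[
\frac{d}{ds}\bigl[s^{p-3}E(s)\bigr] \;\ge\; s^{p-3}\!\int_{\partial B_s(x)}\!\!\bigl(2|\partial_r\phi|^2+p|\nabla R|^{p-2}|\partial_r R|^2\bigr)dH^2 \;-\; Cs\cdot s^{p-3}E(s)\;-\;Cs^{p}.
\]
Multiplying by the integrating factor $e^{Cs^2}$ and adding the primitive $Cs^3$ produces
\[
\frac{d}{ds}\Bigl[e^{Cs^2}s^{p-3}E(s)+Cs^3\Bigr] \;\ge\; e^{Cs^2} s^{p-3}\!\int_{\partial B_s(x)}\!\!\bigl(2|\partial_r\phi|^2+p|\nabla R|^{p-2}|\partial_r R|^2\bigr)dH^2,
\]
for a constant $C$ depending on $p,\|f\|_\infty,\|M\|_\infty$. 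Integrating this from $r_1$ to $r_2$ and discarding the nonnegative factor $e^{Cs^2}\ge 1$ on the right delivers exactly \eqref{mono_ineq2}.

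The main obstacle lies in step two: the stationarity identity does not have a purely monotone structure because of the coupling terms $-2\langle R,\nabla\phi\rangle\,\mathrm{div}\,Y$ and $-2R^{ik}\partial_j\phi^k\partial_i Y^j$ inherited from $|\mathbb{P}(R^t\nabla\phi-I_3)|^2$. The key observation rescuing monotonicity is that, because $R$ is valued in the compact manifold $SO(3)$, these terms are only \emph{linear} in $\nabla\phi$ and hence of lower order relative to $|\nabla\phi|^2$; Cauchy–Schwarz together with the factor of $s$ coming from $|Y|\le s$ on $B_s$ produces precisely the multiplicative $Cs^2$ and additive $Cs^3$ errors that are absorbed into the exponential weight and the additive constant in \eqref{Coss2.0}. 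This is also the reason the result is stated only as an \emph{almost} monotonicity inequality rather than a genuine one.
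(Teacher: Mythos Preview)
Your approach is the same as the paper's: plug the radial test field $Y_\varepsilon(y)=\eta_\varepsilon(|y-x|)(y-x)$ into \eqref{stat-id2}, pass to the limit, bound the coupling and forcing terms via $|R|\le\sqrt{3}$ and Young's inequality, derive a differential inequality for $r^{p-3}E(r)$, and absorb the errors with the integrating factor $e^{Cr^2}$ and the additive $Cr^3$. Two small bookkeeping slips to correct (neither affects the argument): the volume coefficient on $|\nabla\phi|^2$ in the limiting identity is $1$, not $3-p$ (this only helps, since the excess $(p-2)s^{p-4}\int_{B_s}|\nabla\phi|^2\ge 0$ can be discarded, as the paper does in \eqref{stat-id4}); and the volume coupling term $\int_{B_s}\langle R,\nabla\phi\rangle$ carries no extra factor of $s$, so the achievable bound is $Cs\,E(s)+Cs^2$ rather than $Cs^2E(s)+Cs^3$, which still yields precisely the $e^{Cs^2}$ correction you invoke.
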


\begin{proof} For simplicity, assume $x=0\in\Omega$ and $0<r<{\rm{dist}}(0,\partial\Omega)$ and write $B_r=B_r(0)$.
Let $Y(x)=x\eta_\epsilon(|x|)$, where $\eta_\epsilon\in C_0^\infty(B_r)$ is chosen such that 
$\eta_\epsilon\rightarrow \chi_{B_r}$ as $\epsilon\rightarrow 0$. Plugging $Y$ into \eqref{stat-id2} and sending
$\epsilon$ to $0$, we obtain that
\begin{eqnarray}\label{mono-ineq20}
&&(p-3)\int_{B_r}|\nabla R|^p\,dx +r\int_{\partial B_r} |\nabla R|^p\,dH^2-\int_{B_r}|\nabla\phi|^2\,dx
+r\int_{\partial B_r} |\nabla\phi|^2\,dH^2\nonumber\\
&&=-4\int_{B_r} \langle R, \nabla\phi\rangle\,dx +2r\int_{\partial B_r} \langle R, \nabla\phi\rangle \,dH^2
-\int_{B_r}(\langle x\cdot\nabla\phi, f\rangle+\langle x\cdot\nabla R, M\rangle)\,dx\nonumber\\
&&\ \ +pr\int_{\partial B_r}|\nabla R|^{p-2}|\frac{\partial R}{\partial r}|^2 \,dH^2
+2r\int_{\partial B_r}|\frac{\partial \phi}{\partial r}|^2\,dH^2
-2\int_{\partial B_r} x^i R^{ik} \frac{\partial\phi^k}{\partial r}\,dH^2. \label{stat-id3}
\end{eqnarray}
It is easy to estimate
\begin{eqnarray*}
&&\big|2r\int_{\partial B_r} \langle R, \nabla\phi\rangle\,dx \big|
\le Cr^{2}\int_{\partial B_r}|\nabla\phi|^2\,dH^2+Cr^2,\\
&&\big|-4\int_{B_r} \langle R, \nabla\phi\rangle\,dx\big|\le Cr\int_{B_r}|\nabla\phi|^2\,dx+Cr^2,\\
&&\big|-2\int_{\partial B_r} R:{x}\otimes \frac{\partial\phi}{\partial r}\,dH^2\big|
\le r\int_{\partial B_r}|\frac{\partial\phi}{\partial r}|^2\,dH^2+Cr^{3},\\
&&\big|-\int_{B_r}\langle x\cdot\nabla\phi, f\rangle\,dx\big|
\le Cr\int_{B_r}|\nabla\phi|^2\,dx+C\|f\|_{L^\infty(\Omega)}^2 r^{4},\\
&&\big|-\int_{B_r}\langle x\cdot\nabla R, M\rangle\,dx\big|
\le Cr\int_{B_r}|\nabla R|^p\,dx+C\|M\|_{L^{\infty}(\Omega)}^{\frac{p}{p-1}} r^{4}.
\end{eqnarray*}
Substituting these estimates into \eqref{mono-ineq20} yields
\begin{eqnarray}\label{mono-ineq21}
&&(p-3)\int_{B_r}|\nabla R|^p\,dx +r\int_{\partial B_r} |\nabla R|^p\,dH^2-\int_{B_r}|\nabla\phi|^2\,dx
+r\int_{\partial B_r} |\nabla\phi|^2\,dH^2\nonumber\\
&&\ge pr\int_{\partial B_r}|\nabla R|^{p-2}|\frac{\partial R}{\partial r}|^2 \,dH^2+r\int_{\partial B_r}|\frac{\partial \phi}{\partial r}|^2\,dH^2\nonumber\\
&&\ \ -Cr\int_{B_r} |\nabla R|^p\,dx -Cr\int_{B_r} |\nabla\phi|^2\,dx-Cr^2\int_{\partial B_r}|\nabla \phi|^2\,dH^2
\nonumber\\
&&\ \ -C\big(1+\|f\|_{L^\infty(\Omega)}^2+\|M\|_{L^\infty(\Omega)}^{\frac{p}{p-1}}\big) r^2.
\end{eqnarray}
Hence we obtain  for $0<r\le \min\big\{1, {\rm{dist}}(0,\partial\Omega)\big\}$,
\begin{eqnarray}\label{stat-id4}
&&\frac{d}{dr}\Big\{e^{Cr^2}r^{p-3}\int_{B_r}\big(|\nabla R|^p+|\nabla\phi|^2\big)\,dx\Big\}\nonumber\\
&&\ge e^{Cr^2} r^{p-3}\int_{\partial B_r} \big(p |\nabla R|^{p-2}|\frac{\partial R}{\partial r}|^2+ 2|\frac{\partial\phi}{\partial r}|^2\big)\,dH^2
+(p-2)e^{Cr^2} r^{p-4}\int_{B_r}|\nabla\phi|^2\,dx\nonumber\\
&&\ \ -C\big(1+\|f\|_{L^\infty(\Omega)}^2+\|M\|_{L^\infty(\Omega)}^{\frac{p}{p-1}}\big) e^{Cr^2} r^2\nonumber\\
&&\ge
r^{p-3}\int_{\partial B_r} \big(p |\nabla R|^{p-2}|\frac{\partial R}{\partial r}|^2+ 2|\frac{\partial\phi}{\partial r}|^2\big)\,dH^2
\nonumber\\
&&\ \ -C\big(1+\|f\|_{L^\infty(\Omega)}^2+\|M\|_{L^\infty(\Omega)}^{\frac{p}{p-1}}\big) r^2.
\end{eqnarray}
Integrating from $0<r_1\le r_2\le \min\{1, {\rm{dist}}(0, \partial\Omega)\}$, 
we obtain that the following monotonicity inequality:
\begin{eqnarray}\label{mono0}
&&e^{Cr_2^2} {r_2}^{p-3}\int_{B_{r_2}}\big(|\nabla R|^p+ |\nabla\phi|^2\big)\,dx+Cr_2^{3}\nonumber\\
&&\ge e^{Cr_1^2} {r_1}^{p-3}\int_{B_{r_1}}\big(|\nabla R|^p+ |\nabla\phi|^2)\,dx
+Cr_1^{3}\nonumber\\
&&\ \ \ +\int_{r_1}^{r_2} r^{p-3}\int_{\partial B_r} \big(p |\nabla R|^{p-2}|\frac{\partial R}{\partial r}|^2+ |\frac{\partial\phi}{\partial r}|^2\big)\,dH^2dr,
\end{eqnarray}
where $C>0$ depends on $p$, $\|f\|_{L^\infty(\Omega)}$, and $\|M\|_{L^\infty(\Omega)}$.
This completes the proof of \eqref{mono_ineq2}. 
\end{proof}

\section{Div-curl structure of the Cosserat equation \eqref{Coss2}}

\setcounter{equation}{0}
\setcounter{theorem}{0}
This section is devoted to rewriting of the Cosserat equation \eqref{Coss2}$_2$ into a form where
the nonlinearity exhibits algebraic structures similar to that of $p$-harmonic maps into
symmetric manifolds given by H\'elein \cite{Helein} and Toro-Wang \cite{TW}.

Let $so(3)$ be the Lie algebra of $SO(3)$ or equivalently the tangent space of
$SO(3)$ at $I_3$. Recall that a standard orthonormal base of $so(3)$ is given by
$$
{\bf a}_1=\frac{1}{\sqrt{2}}\left(\begin{array}{ccc} 0 & 0 & 0 \\
0 & 0 & -1\\
0 & 1 & 0
\end{array}\right), \ 
{\bf a}_2=\frac{1}{\sqrt{2}}\left(\begin{array}{ccc} 0 & 0 & 1 \\
0 & 0 & 0\\
-1 & 0 & 0
\end{array}\right),\
{\bf a}_3=\frac{1}{\sqrt{2}}\left(\begin{array}{ccc} 0 & -1 & 0 \\
1 & 0 & 0\\
0 & 0 & 0
\end{array}\right).
$$
For any $R\in SO(3)$, 
$$\big\{{\bf V}_1(R)={\bf a}_1 R,\ {\bf V}_2(R)={\bf a}_2 R, \ {\bf V}_3(R)={\bf a}_3 R\big\}$$
forms an orthonormal base of $T_R SO(3)$,
the tangent space of $SO(3)$ at $R$.

From \eqref{Coss2}$_2$ we have that for $i=1,2,3$, 
\begin{equation}\label{Coss3.1}
\langle {\rm{div}}(|\nabla R|^{p-2}\nabla R), {\bf V}_i(R)\rangle
=-\frac{2}{p}\langle \nabla\phi,{\bf V}_i(R)\rangle+\frac{1}{p} \langle M, {\bf V}_i(R)\rangle.
\end{equation}
For $i=1,2,3$, since ${\bf a}_i$ is skew-symmetric, we have that
$$
\langle |\nabla R|^{p-2}\nabla R, \nabla({\bf V}_i(R))\rangle=\langle |\nabla R|^{p-2}\nabla R, {\bf a}_i\nabla R\rangle=0.
$$
Thus we can rewrite the Cosserat equation \eqref{Coss2}$_2$ as follows.
\begin{eqnarray}\label{Coss3.2}
&&{\rm{div}}(|\nabla R|^{p-2}\nabla R)=\sum_{i=1}^3 {\rm{div}}\big(\langle |\nabla R|^{p-2}\nabla R, {\bf V}_i(R)\rangle {\bf V}_i(R)\big)\nonumber\\
&&=\sum_{i=1}^3 \big[\langle {\rm{div}}(|\nabla R|^{p-2}\nabla R), {\bf V}_i(R)\rangle +\langle |\nabla R|^{p-2}\nabla R, \nabla({\bf V}_i(R))\rangle
\big]{\bf V}_i(R)\nonumber\\
&&\ \ \ + \sum_{i=1}^3 \langle |\nabla R|^{p-2}\nabla R, {\bf V}_i(R)\rangle \nabla({\bf V}_i(R))\\
&&=\sum_{i=1}^3\Big[\big(-\frac{2}{p}\langle \nabla\phi, {\bf V}_i(R)\rangle+\frac{1}{p}\langle  M, {\bf V}_i(R)\rangle\big)
{\bf V}_i(R)
+  \langle |\nabla R|^{p-2}\nabla R, {\bf V}_i(R)\rangle \nabla({\bf V}_i(R))\Big].\nonumber
\end{eqnarray}
From the above derivation, we see that for $i=1,2,3$, 
\begin{equation}\label{Coss3.3}
{\rm{div}}\big(\langle |\nabla R|^{p-2}\nabla R, {\bf V}_i(R)\rangle\big)
=-\frac{2}{p}\langle\nabla\phi,{\bf V}_i(R)\rangle+\frac{1}{p}\langle M, {\bf V}_i(R)\rangle.
\end{equation} 
For $i=1,2,3$, let $Y_i:\Omega\to\mathbb R$ solve the auxiliary equation
\begin{equation}\label{Coss3.4}
\Delta Y_i=\frac{2}{p}\langle\nabla\phi,{\bf V}_i(R)\rangle-\frac{1}{p}\langle M, {\bf V}_i(R)\rangle,
\end{equation}
so that 
\begin{equation}\label{Coss3.5}
{\rm{div}}\big(\langle |\nabla R|^{p-2}\nabla R, {\bf V}_i(R)\rangle+\nabla Y_i\big)=0.
\end{equation}
Putting \eqref{Coss3.2}, \eqref{Coss3.3}, \eqref{Coss3.4}, \eqref{Coss3.5} together, we obtain
an equivalent form of \eqref{Coss2}$_2$:
\begin{eqnarray}\label{Coss3.6}
&&{\rm{div}}(|\nabla R|^{p-2}\nabla R)\nonumber\\
&&=\sum_{i=1}^3 \big(\langle |\nabla R|^{p-2}\nabla R, {\bf V}_i(R)\rangle+\nabla Y_i\big) \nabla({\bf V}_i(R))
\nonumber\\
&&\ \ \ -\sum_{i=1}^3 \nabla Y_i \cdot\nabla ({\bf V}_i(R))+
\sum_{i=1}^3\big(-\frac{2}{p}\langle \nabla\phi,{\bf V}_i(R)\rangle+\frac{1}{p}\langle M,{\bf V}_i(R)\rangle\big)
{\bf V}_i(R).
\end{eqnarray}
It is readily seen that as the leading order term of nonlinearity in the right hand side of the equation
\eqref{Coss3.6},
$\big(\langle |\nabla R|^{p-2}\nabla R, {\bf V}_i(R)\rangle+\nabla Y_i\big) \nabla({\bf V}_i(R))$ is
the inner product of a divergence free vector field $\big(\langle |\nabla R|^{p-2}\nabla R, {\bf V}_i(R)\rangle+\nabla Y_i\big)$ and a curl free vector field $\nabla({\bf V}_i(R))$. 

\section{$\epsilon_0$-regularity of stationary solutions of the Cosserat equation} 
\setcounter{equation}{0}
\setcounter{theorem}{0}

In this section, we will establish an $\epsilon_0$-regularity estimate and a partial regularity of
stationary weak solutions of the Cosserat equation \eqref{Coss2} and give a proof of Theorem
\ref{reg1}. The key ingredient is the following energy decay lemma, under the smallness condition.

\begin{lemma} \label{decay1}
For any $2\le p<3$, $\mu_1=\mu_c=\mu_2=1$, $f\in L^\infty(\Omega,\mathbb R^3)$ 
and $M\in L^\infty(\Omega,SO(3))$,
there exist $\epsilon_0>0$ and $\theta_0\in (0,\frac12)$ depending on $p$,
$\|f\|_{L^\infty(\Omega)}$, and $\|M\|_{L^\infty(\Omega)}$ 
such that if $(\phi,R)$ is a stationary weak solution of the Cosserat equation \eqref{Coss2}, 
and satisfies, for $x\in\Omega$ and $0<r<{\rm{dist}}(x,\partial\Omega)$,
\begin{equation}\label{small}
r^{p-3}\int_{B_r(x)} \big(|\nabla R|^p+|\nabla\phi|^2\big)\,dx\le\epsilon_0^p,
\end{equation}
then
\begin{eqnarray}\label{decay2}
&&(\theta_0 r)^{p-3}\int_{B_{\theta_0 r}(x)} \big(|\nabla R|^p+|\nabla\phi|^2\big)\,dx\nonumber\\
&&\le \frac12 \max\Big\{ r^{p-3}\int_{B_r(x)} \big(|\nabla R|^p+|\nabla\phi|^2\big)\,dx, \ r^{p}\Big\}.
\end{eqnarray}
\end{lemma}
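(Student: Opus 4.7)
The plan is to prove Lemma \ref{decay1} by a blow-up argument by contradiction, with the key nonlinear compactness supplied by the div-curl decomposition \eqref{Coss3.6} from Section 3. First I would fix the parameter $\theta_0$ using the linear problem: interior gradient estimates for harmonic functions on $B_1$ give a constant $C_0 > 0$, independent of $\theta$, such that
\begin{equation*}
\theta^{p-3}\int_{B_\theta}(|\nabla b|^p + |\nabla h|^2)\,dy \le C_0\, \theta^p\int_{B_1}(|\nabla b|^p + |\nabla h|^2)\,dy
\end{equation*}
for every pair of harmonic functions $b, h$ on $B_1$ and every $\theta \in (0, \tfrac12]$; I would fix $\theta_0 \in (0,\tfrac12)$ so that $C_0 \theta_0^p \le 1/4$.

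Suppose \eqref{decay2} fails for every $\epsilon_0 > 0$: along a sequence there are stationary weak solutions $(\phi_k, R_k)$, $x_k \in \Omega$, and radii $r_k < \mathrm{dist}(x_k,\partial\Omega)$ with $\epsilon_k^p := r_k^{p-3}\int_{B_{r_k}(x_k)}(|\nabla R_k|^p + |\nabla\phi_k|^2)\,dx \to 0$, while $(\theta_0 r_k)^{p-3}\int_{B_{\theta_0 r_k}(x_k)}(|\nabla R_k|^p + |\nabla \phi_k|^2)\,dx > \tfrac12\max\{\epsilon_k^p, r_k^p\}$. The almost-monotonicity \eqref{mono_ineq2} keeps $r_k$ bounded. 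I would then rescale via $\tilde R_k(y) := R_k(x_k + r_k y)$ and $\tilde\phi_k(y) := r_k^{(p-2)/2}(\phi_k(x_k + r_k y) - \bar\phi_k)$ on $B_1$, where $\bar\phi_k$ denotes the mean of $\phi_k$ over $B_{r_k}(x_k)$, so that $\int_{B_1}(|\nabla\tilde R_k|^p + |\nabla\tilde\phi_k|^2)\,dy = \epsilon_k^p$ and the rescaled forcing terms carry at least a factor of $r_k^{p/2}$. Along a subsequence $\tilde R_k \to R_\infty \in SO(3)$ in $L^p$; writing $\tilde R_k = \exp(\epsilon_k B_k)\, R_\infty$ in the exponential chart and setting $\Phi_k := \epsilon_k^{-1}\tilde\phi_k$, one obtains normalized sequences with $\int_{B_1}(|\nabla B_k|^p + |\nabla \Phi_k|^2) \le 1 + o(1)$, with weak limits $B_\infty \in W^{1,p}(B_1, so(3))$ and $\Phi_\infty \in H^1(B_1, \mathbb R^3)$.

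The crux is identifying the limit equation. Via the div-curl form \eqref{Coss3.6} of the $R$-equation, the leading nonlinearity decomposes as $\sum_i \bigl(\langle |\nabla \tilde R_k|^{p-2}\nabla \tilde R_k, {\bf V}_i(\tilde R_k)\rangle + \nabla Y_i^k\bigr)\cdot \nabla({\bf V}_i(\tilde R_k))$, the inner product of a divergence-free field and a curl-free field. By the Coifman-Lions-Meyer-Semmes theorem this product lies in the local Hardy space $\mathcal H^1_{\mathrm{loc}}(B_1)$, and the Hardy-BMO duality technique of H\'elein \cite{Helein}, Evans \cite{Evans}, and Toro-Wang \cite{TW} upgrades the weak convergence to strong convergence of $\nabla B_k$ in $L^p_{\mathrm{loc}}(B_1)$; strong convergence of $\nabla\Phi_k$ in $L^2_{\mathrm{loc}}(B_1)$ then follows from the rescaled Poisson equation for $\Phi_k$. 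All forcing and coupling contributions vanish in the limit, so $B_\infty$ and $\Phi_\infty$ are harmonic on $B_1$.

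Applying the harmonic decay with the chosen $\theta_0$ to $(B_\infty, \Phi_\infty)$ yields $\theta_0^{p-3}\int_{B_{\theta_0}}(|\nabla B_\infty|^p + |\nabla\Phi_\infty|^2) \le 1/4$; strong convergence and unwinding the rescaling then give
\begin{equation*}
(\theta_0 r_k)^{p-3}\int_{B_{\theta_0 r_k}(x_k)}(|\nabla R_k|^p + |\nabla\phi_k|^2)\,dx \le \tfrac14\epsilon_k^p + o(\epsilon_k^p) + O(r_k^p),
\end{equation*}
contradicting the standing assumption for $k$ large. The main obstacle is the strong convergence of $\nabla B_k$: since the $p$-Laplacian is degenerate for $p > 2$, $R$ is constrained to the nonlinear manifold $SO(3)$, and the coupling with $\phi$ together with the low-order $L^\infty$ forcing precludes a direct energy-difference argument, the special div-curl algebra from Section 3 that exploits the Lie-group structure of $SO(3)$ is exactly what places the nonlinearity in the Hardy space and makes the Hardy-BMO compensation available.
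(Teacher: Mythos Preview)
Your approach is the same blow-up and Hardy--BMO compensation argument as the paper's, but two points need correction.

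First, the limit equation for the rotational variable is wrong. Whether you rescale linearly as the paper does, $\widehat R_k=(R_k-\overline{R_k})/\epsilon_k$, or via the exponential chart $\tilde R_k=\exp(\epsilon_k B_k)R_\infty$, the operator acting on the blow-up is still the $p$-Laplacian: $|\nabla\tilde R_k|^{p-2}\nabla\tilde R_k\approx \epsilon_k^{p-1}|\nabla B_k|^{p-2}\nabla B_k\,R_\infty$, so after dividing by $\epsilon_k^{p-1}$ the limit satisfies ${\rm div}(|\nabla B_\infty|^{p-2}\nabla B_\infty)=0$. Thus $B_\infty$ is a $p$-harmonic \emph{function}, not a harmonic one. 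This matters for your opening paragraph: you must choose $\theta_0$ from the interior $C^{1,\alpha}$ estimate for $p$-harmonic functions (Uhlenbeck/DiBenedetto), not from the harmonic estimate; the paper does exactly this in \eqref{limit-est}. Relatedly, your normalization $\Phi_k=\epsilon_k^{-1}\tilde\phi_k$ gives $\int_{B_1}|\nabla\Phi_k|^2\le \epsilon_k^{p-2}$, which for $p>2$ leaves the rescaled non-decay inequality with a factor $\epsilon_k^{2-p}\to\infty$ in front of the $\phi$-term; the paper instead takes $\widehat\phi_k=\epsilon_k^{-p/2}(\phi_k-\overline{\phi_k})$ so that both energies are normalized to $1$ and the contradiction closes cleanly.

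Second, the step you summarize as ``Hardy--BMO duality upgrades weak to strong convergence'' hides a nontrivial ingredient you did not supply: a \emph{uniform BMO bound} on the blown-up rotation fields. In the paper this is Lemma~\ref{bmo}, and its proof relies essentially on the almost-monotonicity inequality \eqref{mono_ineq2} applied at \emph{every} scale and center (not merely to bound $r_k$): from $r^{p-3}\int_{B_r(y)}|\nabla\widehat R_k|^p\le C$ for all $y\in B_{7/8}$ and $0<r<1/8$, Poincar\'e gives the BMO seminorm bound, and only then can the $\mathcal H^1$--BMO pairing of Evans/Toro--Wang be invoked on the div-curl term. Without this uniform BMO control the duality argument has nothing to pair against, so you should make the role of \eqref{mono_ineq2} here explicit.
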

\begin{proof} We argue it by contradiction. Suppose that the conclusion were false. Then for any $L>0$
with $\|f\|_{L^\infty(\Omega)} +\|M\|_{L^\infty(\Omega)}\le L$ and $\theta\in(0,\frac12)$,
there exist $\epsilon_k\rightarrow 0$, 
$x_k\in\Omega$, and $r_k\rightarrow 0$ such that 
\begin{equation}\label{small1}
r_k^{p-3}\int_{B_{r_k}(x_k)} \big(|\nabla R|^p+|\nabla\phi|^2\big)\,dx\le\epsilon_k^p,
\end{equation}
but
\begin{eqnarray}\label{decay3}
&&(\theta r_k)^{p-3}\int_{B_{\theta r_k}(x_k)} \big(|\nabla R|^p+|\nabla\phi|^2\big)\,dx\nonumber\\
&&> \frac12 \max\Big\{ r_k^{p-3}\int_{B_{r_k}(x_k)} \big(|\nabla R|^p+|\nabla\phi|^2\big)\,dx, 
\  r_k^{p}\Big\}.
\end{eqnarray}
Define the rescaling maps
$$
\begin{cases}
R_k(x)=R(x_k+r_k x), \\
\phi_k(x)=r_k^{\frac{p-2}2}\phi(x_k+r_k x), \\
f_k(x)=r_k^{\frac{p+2}2} f(x_k+r_k x),\\
M_k(x)=r_k^p M(x_k+r_k x),
\end{cases}
\forall x\in B_1.
$$
Then $(\phi_k, R_k)$ solves in $B_1$
\begin{equation}\label{Coss4.0}
\begin{cases}
\Delta \phi_k =r_k^{\frac{p}2}{\rm{div}}(R_k)+\frac12f_k, \\
\displaystyle{\rm{div}}(|\nabla R_k|^{p-2}\nabla R_k)
=\sum_{\alpha=1}^3\langle |\nabla R_k|^{p-2}\nabla R_k, {\bf V}_\alpha(R_k)\rangle \nabla({\bf V}_\alpha(R_k))\\
\displaystyle\ -\frac{1}{p} \sum_{\alpha=1}^3\big[2 r_k^{\frac{p}2}
\langle\nabla\phi_k, {\bf V}_\alpha(R_k)\rangle 
-\langle M_k, {\bf V}_\alpha(R_k)\rangle\big] {\bf V}_\alpha(R_k).
\end{cases}
\end{equation}
Moreover, it holds that
\begin{equation}\label{small2}
\int_{B_1}\big(|\nabla R_k|^p +|\nabla\phi_k|^2\big)\,dx
=r_k^{p-3}\int_{B_{r_k}(x_k)} \big(|\nabla R|^p+|\nabla\phi|^2\big)\,dx
=\epsilon_k^p,
\end{equation}
and
\begin{eqnarray}\label{decay4}
\theta^{p-3}\int_{B_{\theta}} \big(|\nabla R_k|^p+|\nabla\phi_k|^2\big)\,dx
> \frac12 \max\Big\{\int_{B_{1}} \big(|\nabla R_k|^p+|\nabla\phi_k|^2\big)\,dx, \  r_k^{p}\Big\}.
\end{eqnarray}
Now we define the blow-up sequence:
$$
\begin{cases}
\displaystyle\widehat{R_k}(x)=\frac{R_k(x)-\overline{R_k}}{\epsilon_k}, \\
\displaystyle\widehat{\phi_k}(x)=\frac{\phi_k(x)-\overline{\phi_k}}{\epsilon_k^{\frac{p}2}}, 
\end{cases}
\forall x\in B_1,
$$
where $\displaystyle\overline{f}=\frac{1}{|B_1|}\int_{B_1} f$ denotes the average of $f$ over $B_1$. Then $(\widehat{\phi_k}, \widehat{R_k})$ solves, in $B_1$,
\begin{equation}\label{blowupeqn}
\begin{cases}
\Delta \widehat{\phi_k}=r_k^{\frac{p}2}\epsilon_k^{1-\frac{p}2}{\rm{div}}(\widehat{R_k})+\frac12\epsilon_k^{-\frac{p}2}f_k, \\
\displaystyle{\rm{div}}(|\nabla \widehat{R_k}|^{p-2}\nabla \widehat{R_k})=\epsilon_k
\sum_{\alpha=1}^3\langle |\nabla \widehat{R_k}|^{p-2}\nabla \widehat{R_k}, {\bf V}_\alpha(R_k)\rangle \nabla({\bf V}_\alpha(\widehat{R_k}))\\
\displaystyle-\frac{1}{p}\sum_{\alpha=1}^3\big[2 r_k^{\frac{p}2}\epsilon_k^{1-\frac{p}2}
\langle\nabla\widehat{\phi_k}, {\bf V}_\alpha(R_k)\rangle 
-\epsilon_k^{1-p}\langle M_k, {\bf V}_\alpha(R_k)\rangle\big] {\bf V}_\alpha(R_k),
\end{cases}
\end{equation}
satisfies
\begin{equation}\label{small3}
\int_{B_1}\widehat{R_k}\,dx=0, \ \int_{B_1} \widehat{\phi_k}\,dx=0, \
\int_{B_1}\big(|\nabla \widehat{R_k}|^p +|\nabla\widehat{\phi_k}|^2\big)\,dx=1,
\end{equation}
and
\begin{eqnarray}\label{decay5}
\theta^{p-3}\int_{B_{\theta}}\big(|\nabla \widehat{R_k}|^p
+|\nabla\widehat{\phi_k}|^2\big)\,dx> \frac12 \max\Big\{1, \ \frac{r_k^{p}}{\epsilon_k^p}\Big\}.
\end{eqnarray}
In particular, we have
\begin{equation}\label{ratio}
\frac{r_k^{p}}{\epsilon_k^p}\le 2\theta^{p-3}\int_{B_{\theta}} \big(|\nabla \widehat{R_k}|^p
+|\nabla\widehat{\phi_k}|^2\big)\,dx
\le 2\theta^{p-3}.
\end{equation}
This implies that
\begin{equation}\label{ratio1}
r_k\le C\epsilon_k.
\end{equation}
We may assume that there exist 
$\phi_\infty\in H^{1}(B_1,\mathbb R^3)$, $R_\infty\in W^{1,p}(B_1,SO(3))$
such that, after passing to a subsequence, 
$$(\widehat{\phi_k}, \widehat{R_k})\rightharpoonup (\phi_\infty, R_\infty)\ {\rm{in}} \ H^{1}(B_1)\times W^{1,p}(B_1), 
\ (\widehat{\phi_k}, \widehat{R_k})\rightarrow (\phi_\infty, R_\infty)\ {\rm{in}}\ L^2(B_1)\times L^{p}(B_1).$$
Then $(\phi_\infty, R_\infty)$ satisfies  
$$
\begin{cases}
\overline{\phi_\infty}=0,\\
\overline{R_\infty}=0,\\
\displaystyle\int_{B_1}\big(|\nabla R_\infty|^p +|\nabla\phi_\infty|^2\big)\,dx\le 1.
\end{cases}
$$
Moreover, it follows from \eqref{ratio} that
$$\big\|\epsilon_k^{-\frac{p}2} f_k\big\|_{L^\infty(B_1)}\le C\epsilon_k^{-\frac{p}2} r_k^{\frac{p+2}2}
\le Cr_k\rightarrow 0,
$$
$$
\big\|\epsilon_k^{1-p} M_k\big\|_{L^\infty(B_1)}\le C\epsilon_k^{1-p} r_k^p\le C\epsilon_k\rightarrow 0,
$$
and
$$\big\|r_k^{\frac{p}2}\epsilon_k^{1-\frac{p}2} {\rm{div}}(\widehat{R_k})\big\|_{L^p(B_1)}+
\|\big\|r_k^{\frac{p}2} \epsilon_k^{2-p} \nabla\widehat{\phi_k} \big\|_{L^2(B_1)}
\le C\epsilon_k^{1-\frac{p}2} r_k^{\frac{p}2}\le C\epsilon_k\rightarrow 0.
$$
Hence, after sending $k\to\infty$ in the equation \eqref{blowupeqn}, we
conclude that  $\phi_\infty$ is a harmonic function and $R_\infty$ is
a $p$-harmonic function, i.e.,
\begin{equation}\label{limit-eqn2}
\begin{cases}
\Delta\phi_\infty= 0\\
{\rm{div}}\big(|\nabla R_\infty|^{p-2}\nabla R_\infty\big)=0,
\end{cases}
\ {\rm{in}}\ B_1.
\end{equation}
Hence we have that for $0<\theta<\frac12$,
\begin{eqnarray}\label{limit-est}
&&\theta^{p-3}\int_{B_{\theta}} (|\nabla R_\infty|^p+|\nabla\phi_\infty|^2)\,dx\nonumber\\
&&\le C\theta^p \big(\big\|\nabla R_\infty\big\|_{L^\infty(B_\frac12)}^p+
\big\|\nabla \phi_\infty\big\|_{L^\infty(B_\frac12)}^2\big)\nonumber\\
&&\le C\theta^p \int_{B_1}(|\nabla R_\infty|^p +|\nabla\phi_\infty|^2)\,dx\le C\theta^p.
\end{eqnarray}

Next we need to show that $(\widehat{\phi_k}, \widehat{R_k})$ converges strongly to
$(\phi_\infty, R_\infty)$ in $H^1(B_\frac12)\times W^{1,p}(B_\frac12)$, which is based
on the duality between the Hardy space and the BMO space. 
Let $\eta: \R^3 \rightarrow \R$ be a smooth cutoff function satisfying
\[
0\leq \eta\leq 1,\quad \eta=1\ \ \mbox{on} \ B_{\frac14}, \quad  \eta=0\ \ \mbox{on} \ \ \R^3 \backslash B_{\frac38}.
\]
Then we have the following lemma, whose proof is based on the energy monotonicity inequality
\eqref{mono_ineq2} and is similar to that by \cite{Evans} and \cite{TW}.
Denote by ${\rm{BMO}}(\mathbb R^3)$ the space of functions of bounded mean oscillations in $\mathbb R^3$. 

\begin{lemma}\label{bmo}
The sequence  $\{\eta \widehat{R_k}\}_{k\geq1}$ is bounded in {\rm{BMO}}$(\R^3)$.
\end{lemma}
\begin{proof} For the convenience of readers, we sketch the proof here.
Fix any point $x_0\in B_{\frac78}$ and $0<r\leq \frac{1}{8}$, define $
y_k=x_k+r_kx_0\in B_{\frac{7r_k}{8}}(x_k).$
By the monotonicity inequality \eqref{mono_ineq2}, we have
\[
\begin{split}
&\frac{1}{(rr_k)^{3-p}}\int_{B_{rr_k}(y_k)}|\nabla R|^p\,dx
\leq e^{C(rr_k)^2}\frac{1}{(rr_k)^{3-p}}\int_{B_{rr_k}(y_k)}|\nabla R|^p\,dx\\
&\leq e^{2C(rr_k)^2}\frac{1}{(rr_k)^{3-p}}\int_{B_{rr_k}(y_k)}(|\nabla R|^p+|\nabla \phi|^2\,dx
+C(rr_k)^{3}\\
&\leq e^{Cr_k^2}\frac{8^{3-p}}{r_k^{3-p}}\int_{B_{\frac{r_k}8}(y_k)}
(|\nabla R|^p+|\nabla \phi|^2)\,dx+C\big(\frac{1}{8}r_k\big)^{3}\\
&\leq e^{C}\frac{8^{3-p}}{r_k^{3-p}}\int_{B_{r_k}(x_k)}(|\nabla R|^p+|\nabla \phi|^2)\,dx
+C\big(\frac{1}{8}r_k\big)^{3}\\
&\leq e^C8^{3-p} \epsilon_k^p+C8^{-p} r_k^{3}\le C\epsilon_k^p,
\end{split}
\]
where we have used \eqref{ratio} in the last step.
This, combined with the Poincar\'e inequality and the H\"older inequality, yields that
\begin{eqnarray}
\label{1}
&&\Big(\frac{1}{r^3}\int_{B_{r}(x_0)}|\widehat{R_k}-(\widehat{R_k})_{x_0,r}|dx\Big)^p\nonumber\\
&&\le
C\frac{1}{r^{3-p}}\int_{B_{r}(x_0)}|\nabla \widehat{R_k}|^p \nonumber\\
&&= C\frac{1}{\epsilon_k^p(rr_k)^{3-p}}\int_{B_{rr_k}(y_k)}|\nabla R|^p\,dx
\leq C,
\end{eqnarray}
holds for all $k\ge 1$ and all  $x_0\in B_{\frac78}$, $0<r\leq \frac{1}{8}$. 

Applying the John-Nirenberg inequality yields that for any $1\leq q<\infty$,
\[
\big\{\widehat{R_k}\big\}_{k\geq 1}\ \  \mbox{is bounded in} \ \ L^q(B_{\frac78}).
\]
Since $\eta$ is smooth, it follows that for any $y\in B_{r}(x_0)$,
\be\label{3}
\big|(\eta \widehat{R_k})_{x_0,r}-\eta (\widehat{R_k})_{x_0,r}\big|
\leq Cr^{-2}\int_{B_{r}(x_0)}|\widehat{R_k}|\,dx.
\ee
Combining  \eqref{1} with \eqref{3}, it follows that for $x_0\in B_{\frac34}$,
\[
\begin{split}
&\frac{1}{r^3}\int_{B_{r}(x_0)}|\eta \widehat{R_k}-(\eta \widehat{R_k})_{x_0,r}|\,dx\\
&\leq  \frac{1}{r^3}\int_{B_{r}(x_0)}|\eta \widehat{R_k}-\eta(\widehat{R_k})_{x_0,r}|\,dx
+\frac{1}{r^3}\int_{B_{r}(x_0)}|\eta(\widehat{R_k})_{x_0,r}-(\eta \widehat{R_k})_{x_0,r}|\,dx\\
&\leq \frac{1}{r^3}\int_{B_{r}(x_0)}|\widehat{R_k}-(\widehat{R_k})_{x_0,r}|\,dx+
Cr^{-2}\int_{B_{r}(x_0)}|\widehat{R_k}|\,dx\\
&\leq C+\frac{C}{r^{2}}\int_{B_{r}(x_0)}|\widehat{R_k}|\,dx\\
&\leq C+\frac{C}{r^{2}}\big(\int_{B_{\frac78}(x_0)}|\widehat{R_k}|^3\,dx\big)^{\frac13}r^2
\leq C.
\end{split}
\]
Since $\eta=0$ on $\R^3\setminus  B_{\frac38}$,  we have
\[
\sup_k\|\eta \widehat{R_k}\|_{L^1(\R^3)}<\infty.
\]
Hence the above inequality remains to hold for $x_0\in \R^3\setminus  B_{\frac34}$ and $r>0$.  
The proof is complete. \end{proof}

\begin{lemma} \label{strong}
$\nabla  \widehat{R_k}$ converge strongly to 
$\nabla  R_{\infty}$ in $L^p(B_{\frac14})$, 
and $\nabla  \widehat{\phi_k}$ converge strongly to 
$\nabla \phi_{\infty}$ in $L^2(B_{\frac14})$.
\end{lemma}
\begin{proof} 
First notice that scalings of the equation \eqref{Coss3.3} imply that for $i=1,2,3$,
\begin{equation}\label{Coss4.1}
\displaystyle{\rm{div}}\big(\langle |\nabla\widehat{R_k}|^{p-2} \nabla\widehat{R_k}, {\bf V}_i(R_k)\rangle)
=-\frac2p r_k^{\frac{p}2}\epsilon_k^{1-\frac{p}2}\langle\nabla\widehat{\phi_k}, {\bf V}_i(R_k)\rangle
+\frac1{p}\epsilon_k^{1-p}\langle M_k, {\bf V}_i(R_k)\rangle.
\end{equation}
As in \eqref{Coss3.4}, let $Y_k^i:B_1\to\mathbb R$ solve
\begin{equation}\label{Coss4.2}
\begin{cases}
\displaystyle\Delta Y_k^i=\frac2p r_k^{\frac{p}2}\epsilon_k^{1-\frac{p}2}\langle\nabla\widehat{\phi_k}, {\bf V}_i(R_k)\rangle
-\frac1{p}\epsilon_k^{1-p}\langle M_k, {\bf V}_i(R_k)\rangle & {\rm{in}}\ B_1,\\
\ \  Y_k^i=0  & {\rm{on}}\ \partial B_1. 
\end{cases}
\end{equation}
It is easy to see that by $W^{2,2}$-theory, $Y_k^i$ satisfies
\begin{eqnarray}\label{H1-bd}
\big\|\nabla Y_k^i\big\|_{L^2(B_1)}+\big\|\nabla^2 Y_k^i\big\|_{L^2(B_1)}
&\le& Cr_k^{\frac{p}2}\epsilon_k^{1-\frac{p}2}\big\|\nabla\widehat{\phi_k}\big\|_{L^2(B_1)}
+C\epsilon_k^{1-p}\big\|M_k\big\|_{L^2(B_1)}\nonumber\\
&\le& C\big(r_k^{\frac{p}2}\epsilon_k^{\frac{2-p}2}+r_k^p \epsilon_k^{1-p}\big)
\le C\epsilon_k,
\end{eqnarray}
where we have used \eqref{ratio} in the last step.

Adding the equations \eqref{Coss4.1} and \eqref{Coss4.2}, we have that
\begin{equation}\label{div-free}
{\rm{div}}\big(\langle |\nabla\widehat{R_k}|^{p-2} \nabla\widehat{R_k}, {\bf V}_i(R_k)\rangle+\nabla Y_k^i)=0
\ \ {\rm{in}}\ \ B_1,
\end{equation}
and the blowup equation \eqref{blowupeqn}$_2$ becomes
\begin{equation}\label{blowupeqn1}
\begin{cases}
\displaystyle{\rm{div}}(|\nabla \widehat{R_k}|^{p-2}\nabla \widehat{R_k})\\
=\epsilon_k\displaystyle
\sum_{i=1}^3\big(\langle |\nabla \widehat{R_k}|^{p-2}\nabla \widehat{R_k}, {\bf V}_i(R_k)\rangle+\nabla Y_k^i\big) \cdot\nabla({\bf V}_i(\widehat{R_k}))\\
\displaystyle\ \ \ \ -\frac{1}{p}\sum_{i=1}^3\big[2r_k^{\frac{p}2}\epsilon_k^{1-\frac{p}2}\langle\nabla\widehat{\phi_k}, {\bf V}_i(R_k)\rangle -\epsilon_k^{1-p}\langle M_k, {\bf V}_i(R_k)\rangle\big] {\bf V}_i(R_k)\\
\ \ \ \ -\epsilon_k\displaystyle
\sum_{i=1}^3\nabla Y_k^i \cdot\nabla({\bf V}_i(\widehat{R_k}))
\end{cases}
\ \ {\rm{in}}\ \ B_1.
\end{equation}
Define
\[
H_k^i:=\big(\langle |\nabla \widehat{R_k}|^{p-2}\nabla  \widehat{R_k},  {\bf V}_{i}(R_k)\rangle
+\nabla Y_k^i\big)\cdot\nabla ({\bf V}_{i}(\widehat{R_k})).
\]
Then it follows from \eqref{div-free} that $H_k^i\in \mathcal{H}^1_{\rm{loc}}(B_1)$, the local Hardy space
(see \cite{Helein} and \cite{Evans} for some basic properties of Hardy spaces).
For any compact $K\subset B_1$ and $i=1,2,3$, we can use $\frac32<\frac{p}{p-1}\le 2$ and
\eqref{H1-bd} to estimate
\[
\begin{split}
\big\|H_k^i\big\|_{\mathcal{H}^1(K)}&\leq C\big\|\langle |\nabla \widehat{R_k}|^{p-2}\nabla  \widehat{R_k},  
{\bf V}_{i}(R_k)\rangle+\nabla Y_k^i\big\|_{L^{\frac{p}{p-1}}(B_1)}
\big\|\nabla ({\bf V}_{i}(\widehat{R_k})\big\|_{L^{p}(B_1)}\\
&\leq C\big[\|\nabla \widehat{R_k}\|^{p-1}_{L^{p}(B_1)}+\|\nabla {Y}_k^i\|_{L^{\frac{p}{p-1}}(B_1)}\big]
\big\|\nabla ({\bf V}_{i}(\widehat{R_k}))\|_{L^{p}(B_1)}\\
&\leq C, \ \forall k\ge 1.
\end{split}
\]
and
\[
\begin{split}
\big\|H_k^i\big\|_{L^1(B_1)}
&\leq C\big[\|\nabla \widehat{R_k}\|_{L^{p}(B_1)}^{p-1}+\|\nabla {Y}_k^i\|_{L^{\frac{p}{p-1}}(B_1)}\big]
\|\nabla ({\bf V}_{i}(\widehat{R_k}))\|_{L^{p}(B_1)}\\
&\le C, \ \forall k\ge 1.
\end{split}
\]
Assume $\displaystyle\int_{\mathbb R^3}\eta\,dx\ne 0$. For $i=1,2,3$, set 
\[
\mu_k^i=\frac{\int_{\mathbb R^3} H_k^i \eta\,dx}{\int_{\R^3} \eta\,dx}, \ \forall k\ge 1.
\]
Then we have that
\begin{equation}\label{hardy}
\sup_{k\ge 1}\big\|\eta(H_k^i-\mu_k^i)\big\|_{\mathcal{H}^1(\R^3)}\leq C
\sup_{k\ge 1}\big(\|H_k^i\|_{\mathcal{H}^1({\rm{supp}}\eta)}+\|H_k^i\|_{L^1(B_1)}\big)\le C,
\end{equation}
and
\begin{equation}\label{mu-bd}
|\mu_k^i|\le C\|H_k^i\|_{L^1(B_1)}\le C.
\end{equation}
Observe that 
\[
\begin{split}
&\mbox{div}\big(|\nabla \widehat{R_k}|^{p-2}\nabla  \widehat{R_k}-|\nabla R_{\infty}|^{p-2}\nabla  R_{\infty}\big)\\
&=\epsilon_k\sum_{i=1}^3 H_k^i-\epsilon_k  \sum_{i=1}^3\nabla {Y_k^i} \cdot \nabla ({\bf V}_{i}(\widehat{R_k}))\\
&\ \ \ -\frac{1}{p}\sum_{i=1}^3\big[2r_k^{\frac{p}2}\epsilon_k^{1-\frac{p}2}\langle\nabla\widehat{\phi_k}, {\bf V}_i(R_k)\rangle -\epsilon_k^{1-p}\langle M_k, {\bf V}_i(R_k)\rangle\big] {\bf V}_i(R_k).
\end{split}
\]
Multiplying this equation by $\eta^2(\widehat{R_k}-R_\infty)$ and integrating it over $\mathbb R^3$,
we obtain that 
\[
\begin{split}
&\int_{B_1}\eta^2(|\nabla \widehat{R_k}|^{p-2}\nabla  \widehat{R_k}-|\nabla R_{\infty}|^{p-2}\nabla  R_{\infty}):\nabla(\widehat{R_k}-R_\infty) \,dx\\
&+2\int_{B_1}\eta (|\nabla \widehat{R_k}|^{p-2}\nabla  \widehat{R_k}-|\nabla R_{\infty}|^{p-2}\nabla  R_{\infty}):
\nabla\eta\otimes(\widehat{R_k}-R_\infty) \,dx\\
&=\epsilon_k\int_{B_1}\big[\nabla Y_k^i \cdot\nabla ({\bf V}_{i}(\widehat{R_k}))-H_k^i\big]\eta^2(\widehat{R_k}-R_\infty)\, dx\\
&+\frac{1}{p}\sum_{i=1}^3\int_{B_1}\big[2r_k^{\frac{p}2}\epsilon_k^{1-\frac{p}2}\langle\nabla\widehat{\phi_k}, {\bf V}_i(R_k)\rangle -\epsilon_k^{1-p}\langle M_k, {\bf V}_i(R_k)\rangle\big] {\bf V}_i(R_k)\eta^2(\widehat{R_k}-R_\infty)\, dx.
\end{split}
\]
It is not hard to see that
\[
\begin{split}
&\int_{B_1}\eta^2|\nabla {\widehat{R_k}}-\nabla R_\infty|^{p} \,dx\\
&\leq C\int_{B_1}\eta|(|\nabla \widehat{R_k}|^{p-2}\nabla \widehat{R_k}
-|\nabla {R}_\infty|^{p-2}\nabla {R}_\infty)||\nabla\eta||\widehat{R_k}-{R}_\infty|\,dx\\
&\ +C\epsilon_k\big|\int_{B_1} H_k^i \cdot \eta^2(\widehat{R_k}-R_\infty)\,dx\big|
+C\epsilon_k\int_{B_1} \eta^2|\nabla {Y}_k^i| |\nabla \widehat{R_k}||\widehat{R_k}-R_\infty|\,dx\\
&\ +C\int_{B_1}\big[2r_k^{\frac{p}2}\epsilon_k^{1-\frac{p}2}|\nabla\widehat{\phi_k}|
+\epsilon_k^{1-p}|M_k|\big] \eta^2|\widehat{R_k}-R_\infty|\, dx\\
&=I_k+II_k+III_k+IV_k.
\end{split}
\]
Since 
$$|\nabla\widehat{R_k}|^{p-2}\nabla\widehat{R_k} 
\rightharpoonup |\nabla R_\infty|^{p-2}\nabla R_\infty\ \ {\rm{in}}\ \ L^{\frac{p}{p-1}}(B_1),
\ \ \widehat{R_k}\rightarrow R_\infty \ \ {\rm{in}}\ \ L^{p}(B_1),
$$
we conclude that
$$I_k\rightarrow 0.$$
For $III_k$, we have
\begin{eqnarray*}
|III_k|&\le & C\epsilon_k \|\nabla Y_k^i\|_{L^6(B_1)} \|\nabla \widehat{R_k}\|_{L^2(B_1)}
\|\widehat{R_k}-R_\infty\|_{L^3(B_1)} \\
&\le & C\epsilon_k \|\nabla Y_k^i\|_{H^1(B_1)} \|\nabla \widehat{R_k}\|_{L^2(B_1)}
\|\widehat{R_k}-R_\infty\|_{L^3(B_1)}\\
&\le& C\epsilon_k\rightarrow 0.
\end{eqnarray*}
We can apply \eqref{ratio} to estimate $IV_k$ by
\begin{eqnarray*}
|IV_k|&\le & Cr_k^{\frac{p}2}\epsilon_k^{1-\frac{p}2}\|\nabla\widehat{\phi_k}\|_{L^2(B_1)}
\|\widehat{R_k}-R_\infty\|_{L^2(B_1)}+Cr_k^p\epsilon_k^{1-p}\|\widehat{R_k}-R_\infty\|_{L^1(B_1)}\\
&\le & Cr_k^{\frac{p}2}\epsilon_k^{\frac{2-p}2}
\|\widehat{R_k}-R_\infty\|_{L^2(B_1)}+Cr_k^p\epsilon_k^{1-p}\|\widehat{R_k}-R_\infty\|_{L^1(B_1)}\\
&\le & C\epsilon_k\|\widehat{R_k}-R_\infty\|_{L^2(B_1)}\rightarrow 0.
\end{eqnarray*}
While the most difficult term $II_k$ can be estimated by employing the duality between $\mathcal{H}^1(\R^3)$
and ${\rm{BMO}}(\R^3)$ as follows.
\[
\begin{split}
&\int_{B_1}H_k^i \eta^2(\widehat{R_k}-R_\infty)\,dx\\
&=\int_{B_1(0)}\eta(H_k^i-\mu_k^i)\eta(\widehat{R_k}-R_\infty)\,dx
+\mu_k^i\int_{B_1}\eta^2(\widehat{R_k}-{R}_\infty)\,dx\\
&=V_k+VI_k.
\end{split}
\]
It is easy to estimate
\[
|VI_k|\le C|\mu_k^i| \int_{B_1}|\widehat{R_k}-R_\infty|\,dx
\leq C\|H_k^i\|_{L^1(B_1)}\int_{B_1}|\widehat{R_k}-R_\infty|\,dx\rightarrow 0.
\]
We can apply Lemma \ref{bmo} and \eqref{hardy} and
\eqref{mu-bd} to estimate $V_k$ by
\[
\begin{split}
&|V_k|=\big|\int_{B_1}\eta(H_k^i-\mu_k^i)\eta(\widehat{R_k}-R_\infty)\,dx\big|\\
&\leq C\big\|\eta(H_k^i-\mu_k^i)\big\|_{\mathcal{H}^1(\R^3)}\big\|\eta(\widehat{R_k}-R_{\infty})
\|_{{\rm{BMO}}(\R^3)}\leq C.
\end{split}
\]
Therefore we obtain that
$$
|II_k|\le C\epsilon_k(|V_k|+|VI_k|)\le C\epsilon_k\rightarrow 0.
$$
Putting all the estimates of $I_k, II_k, III_k, IV_k$ together, we arrive that 
\[
\int_{B_{\frac14}}|\nabla (\widehat{R_k}-R_\infty)|^{p} \,dx\to 0.
\]

Next, we are going to prove that
\[
\nabla  \widehat{\phi_k} \longrightarrow \nabla  \phi_{\infty}\quad\mbox{in }\ \ L^2(B_{\frac14}).
\]
Since 
\[
-\Delta \widehat{\phi_k}=r_k^{\frac{p}{2}}\epsilon_k^{1-\frac{p}2}{\rm{div}}( \widehat{R_k})
+\frac12\epsilon_k^{-\frac{p}2} f_k
\quad\mbox{in} \ \ B_1,
\]
and
\[
-\Delta  \phi_{\infty}=0\quad\mbox{in} \ \ B_1,
\]
multiplying both equations by $\eta^2(\widehat{\phi_k}-\phi_\infty)$, subtracting the resulting equations,
and integrating over $\R^3$, we obtain that 
\[
\begin{split}
&\int_{B_1}\eta^2|\nabla  (\widehat{\phi_k}-\phi_\infty)|^2\,dx
+2\int_{B_1}\eta \nabla  (\widehat{\phi_k}-\phi_\infty)
\nabla \eta(\widehat{\phi_k}-\phi_\infty) \,dx\\
&=\int_{B_1}\big[r_k^{\frac{p}{2}}\epsilon_k^{1-\frac{p}2}{\rm{div}}( \widehat{R_k})
+\frac12\epsilon_k^{-\frac{p}2} f_k\big] 
\eta^2(\widehat{\phi_k}-\phi_\infty)\,dx.
\end{split}
\]
Since $\widehat{\phi_k} \longrightarrow \phi_{\infty}$
and $
\nabla  \widehat{\phi_k} \rightharpoonup \nabla  \phi_{\infty}$ in  $L^2(B_{\frac14})$,
we conclude that
$$2\int_{B_1}\eta \nabla  (\widehat{\phi_k}-\phi_\infty)
\nabla \eta(\widehat{\phi_k}-\phi_\infty) \,dx\rightarrow 0.$$
Also, since
\begin{eqnarray*}
&&\big\|r_k^{\frac{p}{2}}\epsilon_k^{1-\frac{p}2}{\rm{div}}( \widehat{R_k})+\frac12\epsilon_k^{-\frac{p}2} f_k\big\|_{L^2(B_1)}\\
&&\le C r_k^{\frac{p}2}\epsilon_k^{1-\frac{p}2}\|\nabla \widehat{R_k}\|_{L^2(B_1)}
+C\epsilon_k^{-\frac{p}2} r_k^{\frac{p+2}2}\le C\epsilon_k
\rightarrow 0,
\end{eqnarray*}
we conclude that
$$\int_{B_1}\big[r_k^{\frac{p}{2}}\epsilon_k^{1-\frac{p}2}{\rm{div}}( \widehat{R_k})
+\frac12\epsilon_k^{-\frac{p}2} f_k\big] 
\eta^2(\widehat{\phi_k}-\phi_\infty)\,dx\rightarrow 0.$$
Thus we obtain that 
\[
\int_{B_{\frac14}}|\nabla (\widehat{\phi_k}-\phi_\infty)|^{2}\,dx\to 0.
\]
This completes the proof of Lemma \ref{strong}.  \end{proof}

Now we return to the proof of Lemma \ref{decay1}. It follows from Lemma \ref{strong} and the estimate
\eqref{limit-est} that for sufficiently large $k>1$, it holds that 
\[
\begin{split}
\theta^{p-3}\int_{B_{\theta}}\big(|\nabla \widehat{R_k}|^{p} +|\nabla \widehat{\phi_k}|^{2}\big)
\le C\theta^p+o(1) \le \frac{1}{2}\max\big\{1, \frac{r_k^{p}}{\epsilon_k^p}\big\},
\end{split}
\]
provided that $0<\theta<\frac14$ is chosen to be sufficiently small.
This contradicts to the assumed inequality \eqref{decay5}.  Hence the proof of Lemma \ref{decay1} is
complete. 
\end{proof}

Next we apply Lemma \ref{decay1} and the Marstrand Theorem to give a proof of
Theorem \ref{reg1}. 

\medskip
\noindent{\it Proof of Theorem} \ref{reg1}.  Define the singular set $\Sigma$ by
$$\Sigma=\Big\{x\in \Omega\ \big|\  \Theta^{3-p}((\phi, R),x) \equiv\lim_{r\to 0} {\rm{Coss}}_x\big((\phi, R),r\big) \ge \frac12\epsilon_0^p\Big\}.$$
Here ${\rm{Coss}}_x\big((\phi, R), r\big)$ denotes the modified renormalized Cosserat energy
of $(\phi, R)$ in $B_r(x)$ defined by \eqref{Coss2.0}, which is monotonically increasing with resepct
to $r>0$ by Corollary \ref{mono-ineq1}. Hence the density function 
$$\Theta^{3-p}((\phi, R), x)=\lim_{r\to 0} {\rm{Coss}}_x\big((\phi, R),r\big)$$
exists for any $x\in\Omega$ and is upper semicontinuous  in $\Omega$.  From a simple covering argument
(see \cite{Evans-Gariepy}), we know that 
$$H^{3-p}(\Sigma)=0.$$
For any $x_0\in \Omega\setminus\Sigma$, there exists $r_0>0$ such that $B_{r_0}(x_0)\subset\Omega$,
and
$$
{\rm{Coss}}_{x_1}((\phi,R),\frac{r_0}2)= e^{Cr_0^2} {(\frac{r_0}2)}^{p-3}\int_{B_{\frac{r_0}2}(x_1)}\big(|\nabla R|^p+ |\nabla\phi|^2\big)\,dx+C(\frac{r_0}2)^{3}\le \epsilon_0^p
$$
holds for all $x_1\in B_{\frac{r_0}2}(x_0)$.

Applying Lemma \ref{decay1} repeatedly, we would obtain that there exists $\theta_0\in (0,\frac12)$ such that
\begin{eqnarray}\label{decay-l}
&&(\theta_0^l r_0)^{p-3}\int_{B_{\theta_0^l r_0}(x_1)} (|\nabla R|^p+|\nabla\phi|^2)\,dx\nonumber\\
&&\le 2^{-l} \max\Big\{r_0^{p-3}\int_{B_{r_0}(x_0)}(|\nabla R|^p+|\nabla\phi|^2)\,dx,\ \ 
\frac{Cr_0^{p}}{1-2\theta_0^{p}}\Big\}
\end{eqnarray}
holds for all $x_1\in B_{\frac{r_0}2}(x_0)$ and $l\ge 1$. 

It follows from \eqref{decay-l} that there exists $\alpha_0\in (0,1)$ such that 
\begin{eqnarray}\label{morrey-decay}
&&r^{p-3}\int_{B_{r}(x_1)} (|\nabla R|^p+|\nabla\phi|^2)\,dx\nonumber\\
&&\le \big(\frac{r}{r_0}\big)^{p\alpha_0} \max\Big\{r_0^{p-3}\int_{B_{r_0}(x_0)}(|\nabla R|^p+|\nabla\phi|^2)\,dx,\ \ \frac{Cr_0^{p}}{1-2\theta_0^{p}}\Big\}\nonumber\\
&&\le C(\epsilon_0) \big(\frac{r}{r_0}\big)^{p\alpha_0}
\end{eqnarray}
holds for all $x_1\in B_{\frac{r_0}2}(x_0)$ and $0<r\le \frac{r_0}2$. Thus, by Morrey's decay Lemma
\cite{Evans-Gariepy}, we conclude that $(\phi, R)\in C^{\alpha_0}(B_{\frac{r_0}2}(x_0))$. Since
$$\Delta\phi={\rm{div}}(R)+\frac12f \ \ {\rm{in}}\ \ B_{r_0}(x_0),$$
the higher order regularity theory of Poisson equation implies that $\phi\in C^{1,\alpha_0}(B_{\frac{r_0}2}(x_0))$. Since $x_0\in\Omega\setminus\Sigma$ is arbitrary, we obtain that $(\phi, R)\in C^{1,\alpha_0}(\Omega\setminus\Sigma)\times  C^{\alpha_0}(\Omega\setminus\Sigma)$. 

Next we will employ the Marstrand Theorem \cite{Marstrand} to show that the singular set 
$\Sigma$ is discrete for $2<p<3$.  We argue it by contradiction.  Suppose $\Sigma$ is not
discrete.  Then there exist a sequence of points $\{x_k\}\subset \Sigma $ and $x_0\in \Sigma$
such that $x_k\rightarrow x_0$. Set $r_k=|x_k-x_0|\rightarrow0$ and define
\[
 (\phi_k, \ R_k, \ f_k, \ M_k)(x)=(r_k^{\frac{p-2}2}\phi, \ R, \ r_k^{\frac{p+2}2} f,\ r_k^p M)(x_0+r_kx),\ \forall x\in B_2.
\]
It is readily seen that $(\phi_k, R_k)$ is singular at $0$ and $y_k=\frac{x_k-x_0}{r_k}\in \mathbb S^2$.
Moreover, similar to \eqref{Coss4.0}, $(\phi_k, R_k)$ solves 
\begin{equation}\label{Coss4.10}
\begin{cases}
\Delta \phi_k =r_k^{\frac{p}2}{\rm{div}}(R_k)+\frac12f_k, \\
\displaystyle{\rm{div}}(|\nabla R_k|^{p-2}\nabla R_k)
=\sum_{i=1}^3\langle |\nabla R_k|^{p-2}\nabla R_k, {\bf V}_i(R_k)\rangle \nabla({\bf V}_i(R_k))\\
\displaystyle\ -\frac{1}{p} \sum_{i=1}^3\big[2 r_k^{\frac{p}2}
\langle\nabla\phi_k, {\bf V}_i(R_k)\rangle 
-\langle M_k, {\bf V}_i(R_k)\rangle\big] {\bf V}_i(R_k)
\end{cases}
\ \ {\rm{in}}\ \  B_2.
\end{equation}
It follows from the monotonicity inequality \eqref{mono_ineq2} for $(\phi, R)$ and the scaling argument
that $(\phi_k,R_k)$ also enjoys the following monotonicity inequality, i.e., for $0<r_1<r_2\le 2$
\begin{equation}\label{scale-mono}
\begin{split}
&e^{Cr_1^2}r_1^{p-3}\int_{B_{r_1}}(|\nabla R_k|^p+|\nabla \phi_k|^2)\,dx+Cr_1^{3}\\
&+\int_{r_1}^{r_2}r^{p-3}\int_{\partial B_r}\big(p|\nabla R_k|^{p-2}\big|\frac{\partial R_k}{\partial r}\big|^2
+\big|\frac{\partial \phi_k}{\partial r}\big|^2\big)\,dH^2dr\\
&\leq e^{Cr_2^2}r_2^{p-3}\int_{B_{r_2}}\big(|\nabla R_k|^p+|\nabla \phi_k|^2)\,dx+Cr_2^{3}.
\end{split}
\end{equation}
Moreover, for $k>1$ sufficiently large,
\begin{eqnarray}\label{up-low}
\frac14\epsilon_0^p&\le& 2^{p-3}\int_{B_2}(|\nabla R_k|^p+|\nabla \phi_k|^2)\,dx\nonumber\\
&=&(2r_k)^{p-3}\int_{B_{2r_k}(x_0)}(|\nabla R|^p+|\nabla \phi|^2)\,dx\le C.
\end{eqnarray}
Hence
\[
\int_{B_2}(|\nabla R_k|^p+|\nabla \phi_k|^2)\,dx\quad\mbox{is uniformly bounded above and below}.
\]
Then there exists $( \phi_{\infty}, R_{\infty})\in H^{1}(B_2, \R^3)\times W^{1,p}(B_2, SO(3))$ such that,
after passing to a subsequence, 
\[
 (\phi_k, R_k) \rightharpoonup  (\phi_{\infty}, R_\infty)\quad\mbox{in }\ \ H^{1}(B_2)\times W^{1,p}(B_2).
\]
It is not hard to see that by passing the limit $k\to\infty$ in \eqref{Coss4.10}, we see that
$\phi_\infty$ is a harmonic function in $B_2$, and $R_\infty$ is a $p$-harmonic map into $SO(3)$
in $B_2$.  Moreover, it follows from the lower semicontinuity and the monotonicity inequality
\eqref{scale-mono} that for any $0<s\le 2$, it holds
$$
\int_{s}^{2}r^{p-3}\int_{\partial B_r}\big(p|\nabla R_\infty|^{p-2}\big|\frac{\partial R_\infty}{\partial r}\big|^2+
\big|\frac{\partial \phi_\infty}{\partial r}\big|^2\big)\,dH^2dr=0,
$$
this follows from the fact that for any fixed $0<s\le 2$, 
$$
e^{Cs^2}s^{p-3}\int_{B_{s}}(|\nabla R_k|^p+|\nabla \phi_k|^2)\,dx+Cs^{3}\to \Theta^{3-p}\big((\phi, R), x_0\big),
\ {\rm{as}}\ k\to \infty.
$$
Therefore we must have that 
\[
(\frac{\partial \phi_\infty}{\partial r}, \ \frac{\partial R_\infty}{\partial  r})=(0,\ 0),
\]
or equivalently $(\phi_\infty, R_\infty)$ is homogeneous of degree zero:
\begin{equation}\label{deg-zero}
\big(\phi_\infty(x), R_\infty(x)\big)=\big(\phi_\infty(\frac{x}{|x|}), R_\infty(\frac{x}{|x|})\big),  \ x\in B_2.
\end{equation}
Since $\phi_\infty$ is a smooth harmonic function with homogeneous degree zero, it follows
that $\phi_\infty$ is a constant.

Next we need to show\\
{\it Claim 1}. 
\[
 (\phi_k, R_k) \longrightarrow  (\phi_{\infty}, R_\infty)\quad\mbox{in }\ \ H^{1}(B_1)\times W^{1,p}(B_1).
\]

Assume the claim for the moment. Then it follows from \eqref{up-low} and $\phi_\infty=$ constant
that $R_\infty:B_2\to SO(3)$ is a nontrivial stationary $p$-harmonic map, which has at least two
singular points $0$ and $y_\infty\in \mathbb S^2$ given by
\[
y_\infty=\lim_{k\to \infty}\frac{x_k-x_0}{|x_k-x_0|}.
\]
The singular set of $R_\infty$ contains the line segement $[0y_\infty]$
so that $H^{1}({\rm{Sing}}(R_\infty))>0$, which is impossible. Thus $\Sigma$ is a discrete set.

Finally, we would like to apply Marstrand theorem to prove {\it Claim 1}.
To do it,  we consider a sequence of Radon measures 
\[
\mu_k=(|\nabla R_k|^p+|\nabla \phi_k|^2)\,dx.
\]
Since $\mu_k(B_2)$ is uniformly bounded, we may assume that there is a nonnegative Radon measure 
$\mu$ in $B_2$ such that after passing to a subsequence, 
$$\mu_k\rightarrow \mu$$
as convergence of Radon measures. By Fatou's lemma, we can decompose $\mu$ into
\[
\mu=(|\nabla R_\infty|^p+|\nabla \phi_\infty|^2)\,dx+\nu
\]
for a nonnegative Radon measure $\nu$, called a defect measure.
The monotonicity inequality \eqref{scale-mono} for $(\phi_k, R_k)$ 
implies that $\mu$ is a monotone measure in the following sense:
for $x\in B_1$, $0<r_1<r_2<{\rm{dist}}(x,\partial B_2)$,
\[
e^{Cr_1^2}r_1^{p-3}\mu(B_{r_1}(x))+Cr_1^{3}\leq e^{Cr_2^2}r_2^{p-3}\mu(B_{r_2}(x))+Cr_2^{3}.
\]
In particular, for any $x\in B_1$, the density function
\[
\Theta^{3-p}(\mu,x)=\lim_{r\rightarrow 0}r^{p-3}\mu(B_r(x))
\]
exists  and is upper semicontinuous in $B_1$.  Define the concentration set 
\[
\boldsymbol{ S}:=\bigcap_{r>0}\left\{x\in B_1\ \big|\ 
\liminf_{k\rightarrow \infty}r^{p-3}\int_{B_r(x)}(|\nabla R_k|^p+|\nabla \phi_k|^2)\,dx\geq \frac12\epsilon_0^p\right\}.
\]
We claim that $\boldsymbol{S}$ is a closed subset of $B_1$.  In fact, let $\{x_k\}$ be a sequence of
points in  $\boldsymbol{S}$ such that $x_k\to x_0\in B_1$. 
If $x_0\not\in \boldsymbol{ S}$, then there exists $r_0>0$ and $\delta_0>0$
such that for $k>1$ sufficiently large it holds that 
\[
r_0^{p-3}\int_{B_{r_0}(x_0)}(|\nabla R_k|^p+|\nabla \phi_k|^2)\,dx\le
\frac12\epsilon_0^p-\delta_0.
\]
Taking $k$ large enough so that $|x_k-x_0|<\frac{r_0}2$ and applying the monotonicity inequality to each 
$(\phi_{k}, R_{k})$,  we have
\[
\begin{split}
&e^{C(\frac{r_0}{2})^2}\big(\frac{r_0}{2}\big)^{p-3}\int_{B_{\frac{r_0}2}(x_k)}(|\nabla R_{k}|^p+|\nabla \phi_{k}|^2)\,dx+C\left(\frac{r_0}{2}\right)^{3}\\
&\leq e^{C(r_0-|x_k-x_0|)^2}(r_0-|x_k-x_0|)^{p-3}
\int_{B_{r_0-|x_k-x_0|}(x_k)}(|\nabla R_{k}|^p+|\nabla \phi_{k}|^2)\,dx\\
&\ \ +C\left((r_0-|x_k-x_0|\right)^{3}\\
&\leq e^{C(r_0-|x_k-x_0|)^2}\big(\frac{r_0}{(r_0-|x_k-x_0|)}\big)^{3-p}
r_0^{p-3}\int_{B_{r_0}(x_0)}|\nabla R_{k}|^p+|\nabla \phi_{k}|^2)\,dx\\
&\ \ +C\left((r_0-|x_k-x_0|\right)^{3}\\
&\leq e^{Cr_0^2}\big(\frac{r_0}{(r_0-|x_k-x_0|)}\big)^{3-p}(\frac12\epsilon_0^p-\delta_0)
+C\left((r_0-|x_k-x_0|\right)^{3}\\
&\le \frac12\epsilon_0^p,
\end{split}
\]
provided that $k$ large enough and $r_0$ is chosen sufficiently small.
This contradicts to the fact $x_k\in \boldsymbol{S}$. Hence $\boldsymbol{S}$
is a closed subset.

Suppose $x_*\in B_1\setminus \boldsymbol{S}$. Then there exists $r_*>0$ such that 
\[
\liminf_{k\rightarrow \infty}{r_*}^{p-3}\int_{B_{r_*}(x_*)}
(|\nabla R_k|^p+|\nabla \phi_k|^2)\,dx<\frac12\epsilon_0^p.
\]
Applying the $\epsilon_0$-regularity Theorem \ref{reg1}, we may conclude that after passing to
another subsequence, 
\[
R_k\longrightarrow R_{\infty}\quad\mbox{in }\ \ C^1_{\rm{loc}}\cap W_{\rm{loc}}^{1,p}(B_1\setminus \boldsymbol{ S}),
\]
and
\[
 \phi_k \longrightarrow  R_{\infty}\quad\mbox{in }\ \ C^1_{\rm{loc}}\cap H_{\rm{loc}}^{1}(B_1\setminus \boldsymbol{ S}).
\]
If we denote by ${\rm{Sing}}(\phi_\infty,R_\infty)$ the set of discontinuity of $(\phi_\infty,R_\infty)$, and  
${\rm{supp}}(\nu)$ the support of the defect measure $\nu$.  
Then the above convergence implies that 
$${\rm{Sing}}(\phi_\infty,R_\infty)\bigcup{\rm{supp}}(\nu)\subset \boldsymbol{ S}.$$
On the other hand, if $\hat{x}\in  \boldsymbol{S}$, then after sending $k\to \infty$, we have that 
\[
\frac{\mu(B_r(\hat{x}))}{r^{3-p}}\geq \frac12{\epsilon_0^p}, \ \forall r>0.
\]
If $\hat{x} \not\in {\rm{Sing}}( \phi_{\infty}, R_{\infty})$, then 
$( \phi_{\infty}, R_{\infty})$  is regular near $\hat{x}$ and hence for $r$ sufficiently small,
\[
r^{p-3}\int_{B_r(\hat{x})}(|\nabla R_\infty|^p+|\nabla \phi_\infty|^2)dx\leq \frac14{\epsilon_0^p},
\]
this implies that for small $r>0$,
\[
\frac{\nu(B_r(\hat{x}))}{r^{3-p}}\geq \frac14{\epsilon_0^p},
\]
and hence $\hat{x}\in {\rm{supp}}(\nu)$. Therefore, we conclude that
\begin{lemma}\label{cc3}
\[
{\rm{Sing}}(\phi_\infty,R_\infty)\bigcup {\rm{supp}}(\nu)=\boldsymbol{S}.
\]
\end{lemma}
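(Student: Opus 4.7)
The plan is to verify the two inclusions separately, essentially formalizing the two observations made just above the statement. For the inclusion
$$
{\rm{Sing}}(\phi_\infty,R_\infty)\cup {\rm{supp}}(\nu)\subset \boldsymbol{S},
$$
I would fix an arbitrary point $x_*\in B_1\setminus \boldsymbol{S}$ and show that $(\phi_\infty,R_\infty)$ is regular at $x_*$ and that $\nu$ vanishes in a neighborhood of $x_*$. By definition of $\boldsymbol{S}$, there exists $r_*>0$ and a subsequence (still denoted $k$) such that
$$
r_*^{p-3}\int_{B_{r_*}(x_*)}\bigl(|\nabla R_k|^p+|\nabla\phi_k|^2\bigr)\,dx<\tfrac12\epsilon_0^p.
$$
Choosing $r_*$ even smaller if necessary and using the scaled monotonicity inequality \eqref{scale-mono} for $(\phi_k,R_k)$, the smallness condition \eqref{small} of Lemma \ref{decay1} holds uniformly on balls $B_{\rho}(y)$ with $y$ in a neighborhood $U$ of $x_*$ and $\rho$ comparable to $r_*$. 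Iterating Lemma \ref{decay1} exactly as in the proof of Theorem \ref{reg1} then yields a uniform Morrey-type decay of the form \eqref{morrey-decay} on $U$. By the higher regularity theory for the Poisson equation and for $p$-harmonic maps off the singular set, this decay upgrades to uniform $C^1\cap W^{1,p}$ bounds on compact subsets of $U$, which in turn forces $(\phi_k,R_k)\to(\phi_\infty,R_\infty)$ in $C^1_{\rm loc}\cap W^{1,p}_{\rm loc}(U)$ along a subsequence. This simultaneously shows $x_*\notin {\rm{Sing}}(\phi_\infty,R_\infty)$ and that $\mu_k\to(|\nabla R_\infty|^p+|\nabla\phi_\infty|^2)\,dx$ in $U$, so $\nu\equiv 0$ on $U$ and thus $x_*\notin {\rm{supp}}(\nu)$.

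For the reverse inclusion $\boldsymbol{S}\subset {\rm{Sing}}(\phi_\infty,R_\infty)\cup {\rm{supp}}(\nu)$, I would take $\hat{x}\in \boldsymbol{S}$ and pass to the limit in the defining inequality of $\boldsymbol{S}$: for every $r\in(0,{\rm dist}(\hat{x},\partial B_1))$, by weak lower semicontinuity of $\mu$,
$$
\frac{\mu(\overline{B_r(\hat{x})})}{r^{3-p}}\ge \limsup_{k\to\infty}\frac{\mu_k(B_r(\hat x))}{r^{3-p}}\ge \tfrac12\epsilon_0^p.
$$
Decomposing $\mu=(|\nabla R_\infty|^p+|\nabla\phi_\infty|^2)\,dx+\nu$ gives
$$
\frac{\nu(\overline{B_r(\hat{x})})}{r^{3-p}}\ge \tfrac12\epsilon_0^p-r^{p-3}\!\int_{B_r(\hat x)}\!\bigl(|\nabla R_\infty|^p+|\nabla\phi_\infty|^2\bigr)dx.
$$
If $\hat{x}\notin{\rm{Sing}}(\phi_\infty,R_\infty)$, then $(\phi_\infty,R_\infty)$ is $C^1$ near $\hat{x}$, so the last integral is $O(r^p)$ and therefore $r^{p-3}$ times it tends to $0$ as $r\to 0^+$. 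Thus for all sufficiently small $r>0$,
$$
\frac{\nu(\overline{B_r(\hat{x})})}{r^{3-p}}\ge \tfrac14\epsilon_0^p>0,
$$
which forces $\hat{x}\in {\rm{supp}}(\nu)$. Combining the two inclusions completes the proof.

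The main obstacle is the first inclusion, specifically upgrading the uniform Morrey decay produced by Lemma \ref{decay1} to genuine strong convergence of $\nabla R_k$ in $L^p_{\rm loc}$ off $\boldsymbol{S}$ — this is exactly where the Hardy/BMO argument of Lemma \ref{strong} is needed, together with care in handling the auxiliary Poisson equation for $\phi_k$. The second inclusion, by contrast, is essentially a direct computation from weak convergence of measures once one uses $C^1$-regularity of $(\phi_\infty,R_\infty)$ away from its singular set.
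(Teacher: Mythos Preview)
Your proposal is correct and follows essentially the same approach as the paper: the paper establishes the inclusion ${\rm{Sing}}(\phi_\infty,R_\infty)\cup{\rm{supp}}(\nu)\subset\boldsymbol S$ by invoking the $\epsilon_0$-regularity (which you correctly unpack as iteration of Lemma~\ref{decay1} together with the strong-convergence mechanism of Lemma~\ref{strong}) to obtain locally uniform convergence off $\boldsymbol S$, and then proves the reverse inclusion by the same density computation you give, splitting $\mu$ into its absolutely continuous and defect parts. Your identification of the Hardy/BMO step as the key ingredient for strong $W^{1,p}$ convergence is exactly right.
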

Notice that if $x\in \boldsymbol{S}$, then \[
 \Theta^{3-p}(\mu,x)=\lim_{r\to 0} r^{3-p}\mu(B_r(x))\geq \frac12\epsilon_0^p.
\]
Moreover, for any compact subset $K\subset \subset  B_1$,  and any $x\in \boldsymbol{S}\cap K$, 
\[
\frac12\epsilon^p_0\leq \Theta^{3-p}(\mu,x)\leq r_K^{p-3}\mu(B_2)\le r_K^{p-3} E_0,
\]
where $r_K=\frac{1}{2}{\rm{dist}}(K,\partial B_2)>0$, and
$\displaystyle E_0=\sup_k\int_{B_2}(|\nabla R_k|^p+|\nabla \phi_k|^2)\,dx$.
Recall that by Federer-Ziemer theorem (see \cite{Evans-Gariepy})
\[
\lim_{r\rightarrow 0}r^{p-3}\int_{B_r(x)}(|\nabla R_\infty|^p+|\nabla \phi_\infty|^2)\,dy=0
\]
holds for $H^{3-p}$ a.e. $x\in B_2$. Thus we obtain that
\begin{lemma}\label{conc}
For any compact $K\subset B_1$, if $x\in \boldsymbol{S}\cap K$, then 
\[
\frac12\epsilon_0^p\leq \Theta^{3-p}(\mu,x)<C(K)<\infty.
\]
For $H^{3-p}$ a.e. $x\in \boldsymbol{S}$,
\[
\Theta^{3-p}(\mu,x)=\Theta^{3-p}(\nu,x).
\]
\end{lemma}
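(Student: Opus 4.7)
The plan is to establish the two assertions of Lemma \ref{conc} in turn, making direct use of the framework already assembled: the existence of the density $\Theta^{3-p}(\mu,\cdot)$ (from monotonicity of $e^{Cr^2} r^{p-3}\mu(B_r(x)) + Cr^3$), the uniform Cosserat energy bound $E_0 = \sup_k \int_{B_2}(|\nabla R_k|^p + |\nabla\phi_k|^2)\,dx < \infty$ from \eqref{up-low}, and the Federer--Ziemer vanishing property recalled just before the lemma.

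For the two-sided bound in the first assertion, fix $x \in \boldsymbol{S} \cap K$. By the very definition of $\boldsymbol{S}$, for every $r > 0$ one has
\[
\liminf_{k\to\infty} r^{p-3}\mu_k(B_r(x)) \ge \tfrac12\epsilon_0^p.
\]
The lower semicontinuity of mass under weak-$*$ convergence, applied along the dense set of radii $r$ for which $\mu(\partial B_r(x)) = 0$, yields $r^{p-3}\mu(B_r(x)) \ge \tfrac12\epsilon_0^p$; letting $r \downarrow 0$ along such radii gives $\Theta^{3-p}(\mu, x) \ge \tfrac12\epsilon_0^p$. For the upper bound, set $r_K = \tfrac12 \mathrm{dist}(K, \partial B_2) > 0$, so that $B_{r_K}(x) \subset B_2$ whenever $x \in K$. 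Passing the scaled monotonicity inequality \eqref{scale-mono} to the limit and using $\mu(B_{r_K}(x)) \le \mu(B_2) \le E_0$, I obtain
\[
\Theta^{3-p}(\mu, x) \le e^{C r_K^2} r_K^{p-3}\mu(B_{r_K}(x)) + C r_K^3 \le e^{C r_K^2} r_K^{p-3} E_0 + C r_K^3 =: C(K) < \infty.
\]

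For the density equality, I would decompose $\mu = (|\nabla R_\infty|^p + |\nabla\phi_\infty|^2)\,dx + \nu$ and split
\[
r^{p-3}\mu(B_r(x)) = r^{p-3}\!\int_{B_r(x)}\!(|\nabla R_\infty|^p + |\nabla\phi_\infty|^2)\,dy + r^{p-3}\nu(B_r(x)).
\]
The Federer--Ziemer theorem applied to $|\nabla R_\infty|^p + |\nabla\phi_\infty|^2 \in L^1(B_2)$ forces the first term on the right to tend to $0$ as $r \downarrow 0$ for $H^{3-p}$-a.e. $x \in B_2$. For such $x$ in $\boldsymbol{S}$, the density $\Theta^{3-p}(\mu, x)$ exists by monotonicity, so $\lim_{r\downarrow 0} r^{p-3}\nu(B_r(x))$ must also exist and equal $\Theta^{3-p}(\mu, x)$; this is exactly $\Theta^{3-p}(\nu, x) = \Theta^{3-p}(\mu, x)$.

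The only subtlety, which is minor but deserves care, is the weak-$*$ convergence step used to deduce $r^{p-3}\mu(B_r(x)) \ge \tfrac12\epsilon_0^p$ from the $\liminf$ condition on the $\mu_k$: one must restrict to radii of continuity for $\mu$ so that $\mu_k(B_r(x)) \to \mu(B_r(x))$. This is a standard feature of convergence of Radon measures on $\mathbb R^3$ and is not a real obstacle.
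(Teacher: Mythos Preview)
Your proof is correct and follows essentially the same route as the paper: the lower bound comes from the definition of $\boldsymbol{S}$ together with weak-$*$ convergence of $\mu_k$ to $\mu$, the upper bound from the monotonicity of $r\mapsto e^{Cr^2}r^{p-3}\mu(B_r(x))+Cr^3$ applied at the fixed radius $r_K=\tfrac12\mathrm{dist}(K,\partial B_2)$, and the density identity from the Federer--Ziemer vanishing of $r^{p-3}\int_{B_r(x)}(|\nabla R_\infty|^p+|\nabla\phi_\infty|^2)\,dy$ for $H^{3-p}$-a.e.\ $x$. Your version is in fact slightly more careful than the paper's sketch, since you explicitly handle radii of continuity for the weak-$*$ step and retain the factors $e^{Cr_K^2}$ and $Cr_K^3$ in the upper bound.
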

It follows from Lemma \ref{conc} and standard covering arguments that  for any compact set
$K\subset B_1$
\[
\epsilon^pH^{3-p}(\boldsymbol{S}\cap K)\leq 
\nu(\boldsymbol{S}\cap K)\leq CH^{3-p}(\boldsymbol{ S}\cap K).
\]
Therefore,
\[
\nu(\boldsymbol{ S})=0 \iff H^{3-p}(\boldsymbol{S})=0.
\]
In particular, we have that 
\begin{lemma}\label{c5}
$(\phi_k, R_k)\nrightarrow (\phi_\infty, R_\infty)$ strongly in $H^1(B_1)\times W^{1,p}(B_1)$ if
and only if $\nu(B_1)>0$ if and only if $H^{3-p}(\boldsymbol{S})>0$.
\end{lemma}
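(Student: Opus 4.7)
The plan is to prove the three-way equivalence by proving $(A) \Leftrightarrow (B)$ using the Radon-Riesz property in uniformly convex spaces, and $(B) \Leftrightarrow (C)$ using the density bounds already accumulated in Lemmas \ref{cc3} and \ref{conc}. Here $(A)$ is the negation of strong convergence, $(B)$ is $\nu(B_1)>0$, and $(C)$ is $H^{3-p}(\boldsymbol{S})>0$.

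For the equivalence $(A) \Leftrightarrow (B)$: since $\mu_k\to\mu$ as Radon measures with $\mu = (|\nabla R_\infty|^p+|\nabla\phi_\infty|^2)\,dx + \nu$, and because $\nu$ is supported inside $\boldsymbol{S}\subset B_1$ (by Lemma \ref{cc3}), no mass of $\nu$ concentrates on $\partial B_1$. Hence $\nu(B_1)=0$ is equivalent to
\[
\int_{B_1}|\nabla R_k|^p\,dx\to\int_{B_1}|\nabla R_\infty|^p\,dx,\qquad
\int_{B_1}|\nabla\phi_k|^2\,dx\to\int_{B_1}|\nabla\phi_\infty|^2\,dx.
\]
Combined with the already established weak convergences $\nabla R_k\rightharpoonup \nabla R_\infty$ in $L^p$ and $\nabla\phi_k\rightharpoonup\nabla\phi_\infty$ in $L^2$, the Radon-Riesz property in the uniformly convex spaces $L^p$ ($p\ge 2$) and $L^2$ then upgrades weak to strong convergence. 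Conversely, strong convergence forces the norms to converge, i.e.\ $\nu(B_1)=0$. Thus $(A) \Leftrightarrow (B)$.

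For the equivalence $(B) \Leftrightarrow (C)$: since $\mathrm{supp}(\nu)\subset\boldsymbol{S}$ by Lemma \ref{cc3}, we have $\nu(B_1)=\nu(\boldsymbol{S})$. From Lemma \ref{conc}, for $H^{3-p}$-a.e.\ $x\in\boldsymbol{S}$,
\[
\tfrac{1}{2}\epsilon_0^p\le \Theta^{3-p}(\nu,x)=\Theta^{3-p}(\mu,x)\le C,
\]
and standard Vitali-type covering arguments (cf.\ \cite{Evans-Gariepy}) comparing a Radon measure with $H^{3-p}$ through pointwise density bounds yield the two-sided comparison
\[
\tfrac{1}{2}\epsilon_0^p\, H^{3-p}(\boldsymbol{S}\cap K)\le \nu(\boldsymbol{S}\cap K)\le C\, H^{3-p}(\boldsymbol{S}\cap K)
\]
for every compact $K\subset B_1$. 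Exhausting $B_1$ by such $K$, we obtain $\nu(B_1)=0 \Leftrightarrow H^{3-p}(\boldsymbol{S})=0$, i.e.\ $(B)\Leftrightarrow(C)$.

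The main technical obstacle is the direction from norm convergence to strong convergence. For the linear part $\phi_k$ this is immediate in the Hilbert space $H^1$, but for $R_k$ one must use that $L^p$ is uniformly convex when $p\ge 2$ so that weak plus norm convergence yields strong convergence. A secondary issue is ensuring $\nu$ places no mass on $\partial B_1$, which is handled exactly because $\boldsymbol{S}$ was defined as a subset of the open ball $B_1$, so any concentration of $\nu$ must occur in the interior. Once these two points are addressed, the remainder of the argument is a routine assembly of the previously stated lemmas.
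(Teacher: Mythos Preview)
Your argument is correct and matches the paper's treatment: the paper obtains $(B)\Leftrightarrow(C)$ exactly as you do, from Lemma~\ref{conc} and a Vitali covering comparison giving $\epsilon_0^p H^{3-p}(\boldsymbol{S}\cap K)\le \nu(\boldsymbol{S}\cap K)\le C\,H^{3-p}(\boldsymbol{S}\cap K)$, while $(A)\Leftrightarrow(B)$ is left implicit there as essentially definitional. Your use of the Radon--Riesz property in the uniformly convex spaces $L^p$ and $L^2$ to make $(A)\Leftrightarrow(B)$ explicit is a clean addition; one small step you should spell out is that $\nu(B_1)=0$ together with $\mu(\partial B_1)=0$ first gives convergence only of the \emph{sum} $\int_{B_1}(|\nabla R_k|^p+|\nabla\phi_k|^2)$, and you then split this into separate norm convergence using the weak lower semicontinuity of each term.
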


Return to the proof of {\it Claim 1}. For $2<p<3$, if
\[
(\phi_k, R_k) \nrightarrow (\phi_\infty, R_{\infty})\ {\rm{in}}\ H^1(B_1)\times W_{loc}^{1,p}(B_1),
\]
then  by Lemma \ref{c5},  we must  have $H^{3-p}(\boldsymbol{S})>0$. Hence by Lemma \ref{cc3}
we have for $H^{3-p}$ a.e. $x\in \boldsymbol{S}$,
\[
0<\Theta^{3-p}(\nu,x)<\infty.
\]
Applying Marstrand Theorem to $\nu$ and $\boldsymbol{S}$, we
conclude that $3-p$ must be an integer, which is impossible.  Hence {\it Claim 1} is true.
This completes the proof of Theorem \ref{reg1}. \qed

\section{Stable-stationary solutions of the Cosserat equation}
\setcounter{equation}{0}
\setcounter{theorem}{0}

This section is devoted to the proof of Theorem \ref{reg2}. More precisely, we will show that if
$(\phi, R)$ is a stable stationary solution to the Cosserat equation \eqref{Coss2}. Then the singular
set is empty for $p$ belonging to the range $[2, \frac{32}{15}]$.

It is well-known that $\mathbb{S}^3$ is the universal cover of $SO(3)$, and a locally isometric 
$2$-to-$1$ covering map $\pi:\mathbb{S}^3\to SO(3)$ is given by
$$
\pi(w,x,y,z)=\left(\begin{array}{ccc} 1-2y^2-2z^2 & 2xy-2zw & 2xz+2yw\\
2xy+2zw & 1-2x^2-2z^2 & 2yz-2xw\\
2xz-2yw & 2yz+2xw & 1-2x^2-2y^2\end{array}
\right), \ \forall (w,x,y,z)\in\mathbb{S}^3.
$$
In particular, the curvature operator of $SO(3)$, $R_{SO(3)}$, satisfies
$$
\langle R_{SO(3)}(v,w)v, w\rangle =|v|^2|w|^2-\langle v, w\rangle^2, \ v, w\in T_{R}SO(3).
$$
 
For  $(\phi, R)\in H^1(\Omega,\mathbb{R}^3)\times W^{1,p}(\Omega, SO(3))$,
let 
$$(\phi_t, R_t)\in C^2((-\delta,\delta), H^1(\Omega,\mathbb R^3)\times W^{1,p}(\Omega,SO(3)))$$
be a family of variations of $(\phi, R)$. 
Denote by
$$\eta=\frac{d}{dt}|_{t=0}\phi_t, \ \hat{\eta}=\frac{d^2}{dt^2}|_{t=0}\phi_t,$$
and
$$v=\frac{\partial R_t}{\partial t}\big|_{t=0}, \ \hat{v}=\nabla_{\frac{\partial}{\partial t}}
\frac{\partial R_t}{\partial t}\big|_{t=0}.$$
Applying the equation \eqref{Coss2} for $(\phi, R)$ and direct calculations as in Smith \cite{Smith},
we obtain that
\begin{eqnarray*}
&&\frac{d^2}{dt^2}\big|_{t=0} {\rm{Coss}}(\phi_t, R_t)\\
&&=\frac{d^2}{dt^2}\big|_{t=0} \int_\Omega \big(|\nabla\phi_t|^2
-2\langle R_t, \nabla\phi_t\rangle +|\nabla R_t|^p+(\phi_t-x)\cdot f+\langle R_t, M\rangle\big)\,dx\\
&&=\int_\Omega \big(2|\nabla\eta|^2
-4\langle v,\nabla\eta\rangle +p|\nabla R|^{p-2}(|\nabla v|^2-{\rm{tr}}\langle R_{SO(3)}(v, \nabla R)v,\nabla R\rangle)\\
&&\ \ \qquad+p(p-2)|\nabla R|^{p-4}\langle \nabla R, \nabla v\rangle^2\big)\,dx\\
&&=\int_\Omega \big(2|\nabla\eta|^2
-4\langle v,\nabla\eta\rangle +p|\nabla R|^{p-2}(|\nabla v|^2-|\nabla R|^2|v|^2)\\
&&\ \ \qquad+p(p-2)|\nabla R|^{p-4}\langle \nabla R, \nabla v\rangle^2 \big)\,dx
\end{eqnarray*}
holds for any $\eta\in H^1_0(\Omega,\R^3)$ and $v\in H^1_0\cap L^\infty(\Omega, T_{R}SO(3))$.

\begin{definition} For $2\le p<3$, $\mu_1=\mu_c=\mu_2=1$,
$f\in L^\infty(\Omega,\R^3)$ and $M\in L^\infty(\Omega, SO(3))$, 
a stationary weak solution $(\phi, R)$ of the Cosserat equation
\eqref{Coss2} is called a stable, stationary weak solution of the Cosserat equation
\eqref{Coss2}
if, in addition, 
$$
\frac{d^2}{dt^2}\big|_{t=0} {\rm{Coss}}(\phi_t, R_t)\ge 0,
$$
or, equivalently,
\begin{eqnarray}\label{stable1}
&&\int_\Omega \big(2|\nabla\eta|^2
-4\langle v,\nabla\eta\rangle +p|\nabla R|^{p-2}(|\nabla v|^2-|\nabla R|^2|v|^2)\nonumber\\
&&\qquad+p(p-2)|\nabla R|^{p-4}\langle \nabla R, \nabla v\rangle^2\big)\,dx\ge 0
\end{eqnarray}
holds for any $\eta\in C_0^\infty(\Omega,\mathbb R^3)$ and
$v\in H^1_0(\Omega, T_{R} SO(3))$.
\end{definition}

\begin{lemma} For $2\le p<3$, $\mu_1=\mu_c=\mu_2=1$,
$f\in L^\infty(\Omega,\R^3)$ and $M\in L^\infty(\Omega, SO(3))$, if $(\phi, R)$ is a stable, stationary weak solution of the Cosserat equation
\eqref{Coss2}, then 
\begin{equation}\label{stable2}
\int_\Omega \big(6|\nabla\omega|^2
-4\sum_{i=1}^3 \psi\langle {\bf a}_i R,\nabla\omega\otimes {\bf e}^i\rangle 
+p(p+1)|\nabla R|^{p-2}|\nabla \psi|^2-2p|\nabla R|^p|\psi|^2
\big)\,dx\ge 0
\end{equation}
holds for any $\omega\in C_0^\infty(\Omega)$ and
$\psi\in C^\infty_0(\Omega)$. Here $({\bf e}^1, {\bf e}^2, {\bf e}^3)$ is the standard base of
$\mathbb R^3$. In particular, 
\begin{equation}\label{stable3}
\int_\Omega \big((p+1)|\nabla R|^{p-2}|\nabla \psi|^2-2|\nabla R|^p|\psi|^2
\big)\,dx\ge 0
\end{equation}
holds for any 
$\psi\in C^\infty_0(\Omega)$.
\end{lemma}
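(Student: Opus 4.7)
The strategy is to feed three specific test pairs into the stability inequality \eqref{stable1} and sum. For $i=1,2,3$ and arbitrary $\omega,\psi\in C_0^\infty(\Omega)$, I would take
\[
\eta_i = \omega\,{\bf e}^i \in C_0^\infty(\Omega,\R^3), \qquad v_i = \psi\,{\bf a}_i R.
\]
Since ${\bf a}_i$ is skew-symmetric, $R^t v_i + v_i^t R = \psi R^t({\bf a}_i+{\bf a}_i^t)R = 0$, so each $v_i$ is valued in $T_R SO(3)$; the regularity $v_i \in H^1_0\cap L^\infty(\Omega, T_R SO(3))$ follows from the boundedness of $R\in SO(3)$ and $\psi\in C_0^\infty$.

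Applying \eqref{stable1} to each pair $(\eta_i, v_i)$ and summing in $i$, I would compute each block using three algebraic identities: (a) $\{{\bf a}_i R\}_{i=1}^3$ is an orthonormal basis of $T_R SO(3)$ in the Frobenius inner product; (b) $\sum_{i=1}^3 {\bf a}_i^t {\bf a}_i = I_3$; and (c) ${\rm{tr}}(X^t A X) = 0$ whenever $A$ is skew-symmetric. Expanding $\nabla v_i = \nabla\psi\otimes {\bf a}_i R + \psi\,{\bf a}_i \nabla R$ and invoking (b) together with the skew-symmetry of $R^t\partial_k R$ to kill the cross terms, I arrive at
\[
\sum_i |\nabla \eta_i|^2 = 3|\nabla\omega|^2,\quad \sum_i |v_i|^2 = 3\psi^2,\quad \sum_i |\nabla v_i|^2 = 3|\nabla\psi|^2 + \psi^2|\nabla R|^2,
\]
together with $\sum_i \langle v_i, \nabla\eta_i\rangle = \psi \sum_i \langle {\bf a}_i R, \nabla\omega\otimes {\bf e}^i\rangle$. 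At this stage the summed inequality already reproduces three of the four terms of \eqref{stable2}, but with coefficient $3p$ rather than $p(p+1)$ in front of $|\nabla R|^{p-2}|\nabla\psi|^2$.

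The missing coefficient shift comes from the curvature-type term $p(p-2)|\nabla R|^{p-4}\langle\nabla R,\nabla v\rangle^2$. Using (c) to discard the ${\bf a}_i\nabla R$ piece of $\nabla v_i$, I obtain $\langle \nabla R, \nabla v_i\rangle = \sum_k c_{ik}\partial_k\psi$ with $c_{ik}:=\langle \partial_k R, {\bf a}_i R\rangle$. Cauchy--Schwarz together with the Parseval identity $\sum_{i,k} c_{ik}^2 = |\nabla R|^2$ (from the orthonormal expansion of $\partial_k R$ in $T_R SO(3)$) then yields
\[
\sum_i \langle \nabla R, \nabla v_i\rangle^2 \le |\nabla R|^2 |\nabla\psi|^2.
\]
Since $p\ge 2$ forces $p(p-2)\ge 0$, inserting this upper bound preserves the direction of the inequality and contributes an additional $p(p-2)|\nabla R|^{p-2}|\nabla\psi|^2$, promoting the total coefficient to $3p + p(p-2) = p(p+1)$ as required. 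This is exactly \eqref{stable2}; the companion inequality \eqref{stable3} follows at once by taking $\omega\equiv 0$ and dividing by $p>0$.

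The only real obstacle is bookkeeping: verifying the cancellations via (a)--(c) and confirming that the direction of the Cauchy--Schwarz step is compatible with the sign constraint $p(p-2)\ge 0$. The degenerate case $p=2$ is harmless since the entire $p(p-2)$-term vanishes formally, while for $2<p<3$ the pointwise bound $|\nabla R|^{p-4}\langle\nabla R,\nabla v\rangle^2\le |\nabla R|^{p-2}|\nabla v|^2$ keeps the integrand well-defined on the critical set $\{\nabla R = 0\}$.
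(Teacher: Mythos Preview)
Your proposal is correct and follows essentially the same route as the paper: choose $\eta_i=\omega\,{\bf e}^i$, $v_i=\psi\,{\bf a}_i R$, sum \eqref{stable1} over $i$, and use the orthonormality of $\{{\bf a}_i R\}$ together with $\sum_i {\bf a}_i^t{\bf a}_i=I_3$ and the skew-symmetry of $R^t\nabla R$ to reduce the sums, then bound $\sum_i\langle\nabla R,\nabla v_i\rangle^2\le |\nabla R|^2|\nabla\psi|^2$ and exploit $p(p-2)\ge 0$. The only cosmetic difference is that the paper identifies $\sum_i\langle\nabla R,\nabla v_i\rangle^2=|\nabla\psi\cdot\nabla R|^2$ before applying Cauchy--Schwarz, whereas you phrase the same estimate via Parseval on the coefficients $c_{ik}$.
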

\begin{proof} It is readily seen that \eqref{stable3} follows immediately from \eqref{stable2} by taking $\omega=0$.
Thus it suffices to show \eqref{stable2}. For any $\omega\in C_0^\infty(\Omega)$ and
$\psi\in C^\infty_0(\Omega)$, 
let $\eta=\omega {\bf e}^i$ and $v=\psi {\bf a}_i R$ and substitute them into \eqref{stable1} and then
take summation over $i=1,2,3$ , we obtain that
\begin{eqnarray}\label{stable4}
&&\int_\Omega \big(2\sum_{i=1}^3|\nabla(\omega {\bf e}^i)|^2
-4\sum_{i=1}^3 \psi\langle {\bf a}_i R,\nabla\omega\otimes {\bf e}^i\rangle+p(p-2)|\nabla R|^{p-4}
\sum_{i=1}^3\langle\nabla R, \nabla(\psi{\bf a}_i R)\rangle^2 \nonumber\\
&&\ \ +p|\nabla R|^{p-2}\sum_{i=1}^3(|\nabla (\psi{\bf a}_i R)|^2-|\nabla R|^2|\psi{\bf a}_i R|^2)
\big)\,dx\ge 0.
\end{eqnarray}
Observe that
\begin{eqnarray*}
&&\sum_{i=1}^3\langle\nabla R, \nabla(\psi{\bf a}_i R)\rangle^2
=\sum_{i=1}^3\big[\sum_{j=1}^3\nabla_j\psi\langle\nabla_j R, {\bf a}_i R\rangle+
\psi\langle\nabla R, {\bf a}_i \nabla R\rangle\big]^2\\
&&=\sum_{i=1}^3 \langle\nabla\psi\cdot\nabla R, {\bf a}_i R\rangle^2
=|\nabla\psi\cdot\nabla R|^2\le |\nabla\psi|^2|\nabla R|^2,\\
&&\sum_{i=1}^3|\nabla(\omega {\bf e}^i)|^2=3|\nabla\omega|^2,
\ \ \sum_{i=1}^3|\nabla R|^2|\psi{\bf a}_i R|^2=3|\nabla R|^2|\psi|^2,
\end{eqnarray*}
and
\begin{eqnarray*}
&&\sum_{i=1}^3|\nabla (\psi{\bf a}_i R)|^2\\
&&=|\nabla\psi|^2 \sum_{i=1}^3 |{\bf a}_i R|^2
+2\psi\nabla\psi \sum_{i=1}^3\langle {\bf a}_i R, {\bf a}_i \nabla R\rangle 
+|\psi|^2\sum_{i=1}^3\langle {\bf a}_i\nabla R, {\bf a}_i \nabla R\rangle\\
&&=3|\nabla\psi|^2+\psi\nabla\psi {\rm{tr}}(R^T {\bf a}_i^T{\bf a}_i \nabla R
+\nabla R^T {\bf a}_i^T {\bf a}_i R) +|\psi|^2 
{\rm{tr}}\big(\nabla R^T\nabla R (\sum_{i=1}^3 {\bf a}_i^T {\bf a_i})\big)\\
&&= 3|\nabla\psi|^2+\psi\nabla\psi {\rm{tr}}[(R^T  \nabla R
+\nabla R^T R)({\bf a}_i^T {\bf a}_i)] +|\psi|^2 
{\rm{tr}}\big(\nabla R^T\nabla R (\sum_{i=1}^3 {\bf a}_i^T {\bf a_i})\big)\\
&&=3|\nabla\psi|^2+|\nabla R|^2|\psi|^2,
\end{eqnarray*}
where we have used 
$${\bf a}_1^T {\bf a}_1={\rm{diag}}(0,\frac12,\frac12), 
\ {\bf a}_2^T {\bf a}_2={\rm{diag}}(\frac12,0,\frac12),\ {\bf a}_3^T {\bf a}_3={\rm{diag}}(\frac12,\frac12,0),$$
and
$$\langle R, {\bf a}_i \nabla R\rangle=0, \ R^T\nabla R+\nabla R^T R=0.$$
Plugging these identities into \eqref{stable4}, we obtain \eqref{stable2}.
\end{proof}

Now we can extend the partial regularity theorem for stationary weak solutions of the Cosserat euqation
\eqref{Coss2} obtained in the previous section
to the class of stable weak solutions of the Cosserat euqation
\eqref{Coss2}. First, we consider Theorem \ref{reg2} in the case that $p=2$.
Namely, we will show that

\begin{theorem} For $f\in L^\infty(\Omega,\mathbb R^3)$ and $M\in L^\infty(\Omega, SO(3))$,
and $\mu_1=\mu_c=\mu_2=1$, assume that $(\phi, R)\in H^1(\Omega,\mathbb R^3)\times H^1(\Omega, SO(3))$
is a stable, stationary  weak solution of the Cosserat euqation
\eqref{Coss2}. Then $(\phi, R)\in C^{1,\alpha}(\Omega, \mathbb R^3)\times C^\alpha(\Omega, SO(3))$
for some $\alpha\in (0,1)$.
\end{theorem}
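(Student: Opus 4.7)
\medskip

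\noindent\textbf{Proof plan.} Since $p=2$, Theorem \ref{reg1} already yields a closed singular set $\Sigma\subset\Omega$ with $H^{1}(\Sigma)=0$, and $(\phi,R)\in C^{1,\alpha}(\Omega\setminus\Sigma)\times C^\alpha(\Omega\setminus\Sigma)$. The objective is to prove $\Sigma=\emptyset$, which I would establish by contradiction combined with a tangent-map analysis. Suppose $x_0\in\Sigma$. Then the $\epsilon_0$-regularity Lemma \ref{decay1} forces the renormalized density $\Theta^1((\phi,R),x_0)\ge \tfrac12\epsilon_0^{2}$. Form the blow-up sequence
\[
\phi_k(x)=\phi(x_0+r_k x),\qquad R_k(x)=R(x_0+r_k x),\qquad r_k\downarrow 0,
\]
with rescaled forcings $f_k=r_k^{2}f(x_0+r_k\cdot)\to 0$ and $M_k=r_k^{2}M(x_0+r_k\cdot)\to 0$ in $L^\infty$. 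The monotonicity inequality \eqref{mono_ineq2} bounds $\int_{B_2}(|\nabla R_k|^2+|\nabla\phi_k|^2)\,dx$ uniformly and from below by $\tfrac14\epsilon_0^{2}$. Extracting a weak $H^1\times H^1$ limit $(\phi_\infty,R_\infty)$ and passing $k\to\infty$ in \eqref{Coss2} gives a harmonic $\phi_\infty$ and a stationary harmonic map $R_\infty:B_2\to SO(3)$. Monotonicity integrated between scales forces $\partial_r\phi_\infty\equiv\partial_r R_\infty\equiv 0$, so both are homogeneous of degree zero; the harmonic $\phi_\infty$ is therefore constant, while $R_\infty$ is determined by a smooth stationary harmonic map $\bar R_\infty:\mathbb S^{2}\to SO(3)$.

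The crucial analytic step is to upgrade weak to strong $H^1_{\rm loc}$-convergence. Following the defect-measure scheme of Hong-Wang \cite{HW1999} and Lin-Wang \cite{LW2006}, let $\mu$ be the weak-$\ast$ limit of $(|\nabla R_k|^2+|\nabla\phi_k|^2)\,dx$ and set $\nu:=\mu-(|\nabla R_\infty|^2+|\nabla\phi_\infty|^2)\,dx$. The stability inequality \eqref{stable3} with $p=2$ reads
\[
\int_\Omega\big(3|\nabla\psi|^2-2|\nabla R|^2\psi^2\big)\,dx\ge 0,\qquad \forall\psi\in C^\infty_0(\Omega),
\]
and is invariant under our dilation, so the inequality for each $R_k$ passes to the limit as
\[
\int\big(3|\nabla\psi|^2-2|\nabla R_\infty|^2\psi^2\big)\,dx\ge 2\int\psi^2\,d\nu.
\]
Testing with cutoffs concentrated on $\mathrm{supp}(\nu)\subset\boldsymbol S$ (which, by the analysis of Lemmas \ref{cc3}-\ref{c5} specialized to $p=2$, is contained in the concentration set and carries zero $H^1$-measure) forces $\nu\equiv 0$. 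Hence $\nabla R_k\to\nabla R_\infty$ strongly in $L^2_{\rm loc}$ and $\nabla\phi_k\to\nabla\phi_\infty$ strongly in $L^2_{\rm loc}$; in particular $R_\infty$ itself inherits the stability inequality, so $\bar R_\infty:\mathbb S^2\to SO(3)$ is a (possibly non-trivial) stable harmonic map.

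Finally, I invoke the classical non-existence result of Schoen-Uhlenbeck \cite{SU1984}: lifted through the locally isometric $2$-to-$1$ cover $\pi:\mathbb S^{3}\to SO(3)$, every stable harmonic map $\mathbb S^{2}\to SO(3)$ is constant. Combined with the strong convergence this makes $\int_{B_1}(|\nabla R_k|^2+|\nabla\phi_k|^2)\,dx\to 0$, contradicting the lower bound $\tfrac14\epsilon_0^{2}$. Therefore $\Sigma=\emptyset$ and Theorem \ref{reg1} gives the claimed global $C^{1,\alpha}\times C^\alpha$ regularity. The principal obstacle in this plan is the killing of the defect measure by stability; this is where the rigid $L^2$-structure available at $p=2$, the symmetry of $SO(3)$ (entering through the div-curl reformulation of Section 3), and a Hardy-BMO duality analogous to that in the proof of Lemma \ref{strong} all conspire to make the argument work.
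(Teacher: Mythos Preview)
Your overall strategy matches the paper's: blow up at a putative singular point, use monotonicity to get a homogeneous-degree-zero limit, kill the defect measure via the stability inequality, and then invoke Schoen--Uhlenbeck nonexistence (after lifting through $\pi:\mathbb S^3\to SO(3)$). The endgame is identical.

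There is, however, a circularity in your treatment of the defect measure. You write that $\mathrm{supp}(\nu)\subset\boldsymbol S$ ``carries zero $H^1$-measure'' by Lemmas \ref{cc3}--\ref{c5}. Those lemmas do \emph{not} give $H^1(\boldsymbol S)=0$; they give $H^1(\boldsymbol S)<\infty$ together with the equivalence $\nu\equiv 0\iff H^1(\boldsymbol S)=0$. So asserting $H^1(\boldsymbol S)=0$ to conclude $\nu\equiv 0$ is assuming what you want. The fix is exactly what the paper does (and is implicit in the Hong--Wang/Lin--Wang references you cite): from $H^1(\boldsymbol S)<\infty$ one gets ${\rm Cap}_2(\boldsymbol S)=0$, so for every $\delta>0$ there is $\omega_\delta\in C_0^\infty$ with $\omega_\delta\equiv 1$ near $\boldsymbol S$ and $\int|\nabla\omega_\delta|^2<\delta$. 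Plugging $\psi=\omega_\delta$ into the (rescaled) stability inequality $\int(3|\nabla\psi|^2-2|\nabla R_k|^2\psi^2)\ge 0$ gives $\int_{\{\omega_\delta=1\}}|\nabla R_k|^2\le \tfrac32\delta$; combined with a Vitali covering of $\boldsymbol S$ and the lower density bound, this forces $H^1_{\delta^2}(\boldsymbol S)\le C\delta$, hence $H^1(\boldsymbol S)=0$ and $\nu\equiv 0$. Your limiting version of the inequality (with $2\int\psi^2\,d\nu$ on the right) would also work once you feed in these capacity cutoffs, but you must invoke $H^1(\boldsymbol S)<\infty\Rightarrow{\rm Cap}_2(\boldsymbol S)=0$, not $H^1(\boldsymbol S)=0$.

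A secondary point: the defect measure $\nu$ is built from $|\nabla R_k|^2+|\nabla\phi_k|^2$, while the stability inequality only controls $|\nabla R_k|^2$. The paper handles this by using the $W^{1,q}$-estimate for $\phi$ (from the first equation in \eqref{Coss2}) to show that the $\phi$-contribution on small balls around concentration points is negligible; you should say a word about this as well.
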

\begin{proof} From the small energy regularity theorem obtained in the previous section, we know that
there exists a closed singular set $\Sigma\subset\Omega$, with $H^1(\Sigma)=0$, such that
$(\phi, R)\in C^{1,\alpha}(\Omega\setminus\Sigma)\times C^\alpha (\Omega\setminus\Sigma)$ for some $0<\alpha<1$.

Now we want to show $\Sigma=\emptyset$. For, otherwise, there exists $x_0\in\Sigma$ such that
$$\Theta^1((\phi, R), x_0)\equiv\lim_{r\downarrow 0} 
r^{-1}\int_{B_r(x_0)} (|\nabla R|^2+|\nabla\phi|^2)\,dx
\ge \epsilon_0^2>0.$$
For any sequence of radius $r_i\downarrow 0$, define the blow up sequence
$$(\phi_i, R_i, f_i,M_i)(x)=(\phi,R, r_i^2 f, r_i^2M)(x_0+r_i x), \ \forall x\in B_2.$$
Then
$$\lim_{i\to\infty} 2^{-1}\int_{B_2} (|\nabla R_i|^2+|\nabla\phi_i|^2)\,dx
=\Theta^1((\phi, R), x_0)\ge \epsilon_0^2.
$$
Thus there exists $(\phi_0, R_0)\in H^1(B_2,\mathbb R^3)\times H^1(B_2, SO(3))$ 
such that after passing to a  subsequence, 
$$(\phi_i, R_i)\rightharpoonup (\phi_0, R_0) \ {\rm{in}}\ H^1(B_2,\mathbb R^3)\times H^1(B_2, SO(3)).$$
Since $(\phi_i, R_i)$ satisfies
\begin{equation}\label{cosserat}
\begin{cases}\Delta\phi_i=r_i {\rm{div}} R_i+\frac12 f_i\\
\Delta R_i+\frac{2}{p} r_i\nabla\phi_i-\frac{1}{p} M_i \perp T_{R_i} SO(3),
\end{cases}
\end{equation}
it follows, after sending $i\to\infty$, that on $B_2$, $\phi_0$ is a harmonic function  and
$R_0$ is a harmonic map into $SO(3)$. We now need

\medskip
\noindent{\it Claim 2}: $(\phi_i, R_i)\rightarrow (\phi_0, R_0) \ {\rm{in}}\ H^1(B_1,\mathbb R^3)\times H^1(B_1, SO(3)).$

\medskip
We will apply the technique of potential theory 
by Hong-Wang \cite{HW1999} and Lin-Wang \cite{LW2006} to prove this claim.
Let $\nu\ge 0$ be a Radon measure in $B_2$ such that
$$\mu_i\equiv(|\nabla R_i|^2+|\nabla\phi_i|^2)\,dx
\rightharpoonup \mu\equiv (|\nabla R_0|^2+|\nabla\phi_0|^2)\,dx+\nu
$$
as convergence of measures in $B_2$. It suffices to show
$\nu\equiv 0$ in $B_1$.  Notice that $(\phi_i,R_i)$, solving \eqref{cosserat}, is indeed a stationary 
weak solution of the Euler-Lagrange equation of critical point of the Cosserart energy functional
$$
E_i(\widehat{\phi}, \widehat{R})=\int_{B_2} (|\nabla \widehat{R}|^2
+|\nabla\widehat{\phi}|^2-2r_i\langle \widehat{R}, \nabla \widehat{\phi}\rangle
+(\widehat{\phi}-x)\cdot f_i+\langle\widehat{R}, M_i\rangle)\,dx.
$$
In particular, the $\epsilon_0$-regularity theorem is applicable to $(\phi_i, R_i)$ and we conclude that if
we define
\begin{eqnarray*}
\mathcal{S}&=&\bigcap_{r>0}
\Big\{y\in B_{\frac32}: \  \lim_{i\to\infty} r^{-1}\int_{B_r(y)} (|\nabla R_i|^2+|\nabla\phi_i|^2)\,dx\ge \epsilon_0^2\Big\}\\
&=&\Big\{y\in B_{\frac32}: \  \Theta^1(\mu, y)=\lim_{r\to 0}
r^{-1}\mu({B_r(y)})\ge \epsilon_0^2\Big\}.
\end{eqnarray*}
Then the following statements hold:\\
(i) $\mathcal{S}$ is closed with $H^1(\mathcal{S})<\infty$,  ${\rm{supp}}(\nu)\subset \mathcal{S}$
and $\Theta^1(\nu,y)=\Theta^1(\mu, y)\ge \epsilon_0^2$ for $H^1$ a.e. $y\in\mathcal{S}$.  \\
(ii) There exists $\alpha\in (0,1)$ such that 
$$(\phi_i, R_i)\rightarrow (\phi_0, R_0) \ {\rm{in}}\ 
(C^\alpha_{\rm{loc}}\cap H^1_{\rm{loc}})(B_\frac32\setminus\mathcal{S}).
$$
(iii) 
$$C_1(\epsilon_0) H^1(\mathcal{S})\le \nu(B_\frac32)\le C_2(\epsilon_0) H^1(\mathcal{S}).$$
In particular, $\nu\equiv 0$ if  and only if $H^1(\mathcal{S})=0$.  It follows
from $H^1(\mathcal{S})<+\infty$ that $Cap_2(\mathcal{S})=0$. Hence for
any $\delta>0$, there exists $\omega_\delta\in C^\infty_0(B_2)$ such that
$$\mathcal{S}\subset {\rm{int}}(\{\omega_\delta=1\}),$$
and
\begin{equation}\label{cap2}
\int_{B_2} |\nabla\omega_\delta|^2\,dx\le \delta.
\end{equation}
Hence for any $a\in\mathcal{S}$, there exists $0<r_a<\delta^2$  such that
$$\omega_\delta\ge \frac12 \ {\rm{on}}\ B_{r_a}(a).$$ 
From the compactness of $\mathcal{S}$ and Vitali's covering lemma, 
there exist $1\le l<\infty$ and $\{a_m\}_{m=1}^l\subset\mathcal{S}$ such that
$\{B_{\frac{r_{a_m}}5}(a_m)\}_{m=1}^l$ are mutually disjoint, and
$$\mathcal{S}\subset \bigcup_{m=1}^l B_{r_{a_m}}(a_m).$$
From the definition of $\mathcal{S}$, there exists a sufficiently large 
$i_l>0$ such that
\begin{equation}\label{lower-bd}
\frac{\epsilon_0^2} 2\le \big(\frac{r_{a_m}}{5}\big)^{-1} \int_{B_{\frac{r_{a_m}}{5}}(a_m)} 
(|\nabla R_i|^2+|\nabla \phi_i|^2)\,dx,
 \ \forall i\ge i_l, \ m=1,\cdots, l.
\end{equation}
By the $W^{1,q}$-estimate on $\phi$, we know that 
$$\|\nabla \phi\|_{L^q(K)}\le C(q, K)$$
holds for any compact set $K\Subset \Omega$ and $1<q<\infty$.
Hence for any $i\ge i_l$ and $m=1,\cdots, l$, it follows from H\"older's inequality that
\begin{eqnarray*}
\big(\frac{r_{a_m}}{5}\big)^{-1} \int_{B_{\frac{r_{a_m}}{5}}(a_m)} |\nabla \phi_i|^2\,dx
&\le& 
C\big({r_ir_{a_m}}\big)^{-1}\int_{B_{r_i r_{a_m}}(x_0+r_ia_m)} |\nabla\phi|^2\,dx\\
&\le& C(q) (r_i r_{a_m})^{2-\frac{6}{q}}
\le C\delta^{\frac32}\le \frac14{\epsilon_0^2},
\end{eqnarray*}
provided we choose $q=12$ and $\delta\le \displaystyle\big(\frac{\epsilon_0^2}{4C}\big)^\frac23$ in the last step.
Substituting this estimate into \eqref{lower-bd}, we obtain that 
\begin{equation}\label{lower-bd1}
\frac14{\epsilon_0^2} \le \big(\frac{r_{a_m}}{5}\big)^{-1} \int_{B_{\frac{r_{a_m}}{5}}(a_m)} |\nabla R_i|^2\,dx,
 \ \forall i\ge i_l, \ m=1,\cdots, l.
\end{equation}
Therefore for all $i\ge i_l$, we can bound
\begin{eqnarray}\label{size}
H^1_{\delta^2}(\mathcal{S})&\le & C\sum_{m=1}^l r_{a_m}= 5C\sum_{m=1}^l \frac{r_{a_m}}5\nonumber\\
&\le & \frac{20 C}{\epsilon_0^2}\sum_{m=1}^l\int_{B_{\frac{r_{a_m}}{5}}(a_m)} |\nabla R_i|^2\,dx\nonumber\\
&\le &\frac{80 C}{\epsilon_0^2}\int_{\bigcup_{m=1}^l B_{\frac{r_{a_m}}{5}}(a_m)} |\nabla R_i|^2\omega_\delta^2\,dx
\nonumber\\
&\le & \frac{80 C}{\epsilon_0^2}\int_{B_2} |\nabla R_i|^2\omega_\delta^2\,dx.
\end{eqnarray}
It follows from the stability of $(\phi, R)$ and a scaling argument  that $R_i$ satisfies the
stability inequality \eqref{stable3} so that
\begin{equation}\label{stable5}
\int_{B_2} |\nabla R_i|^2\omega_\delta^2\,dx
\le \frac32\int_{B_2}|\nabla\omega_\delta|^2\,dx, \ \forall i\ge i_l.
\end{equation}
Plugging \eqref{stable5} into \eqref{size} and applying \eqref{cap2}, we would obtain that
$$
H^1_{\delta^2}(\mathcal{S})\le C(\epsilon_0) \delta.
$$
This, after sending $\delta\to 0$, would yield $H^1(\mathcal{S})=0$ and hence {\it Claim 2} is true.

It follows from the $H^1$-strong convergence of $(\phi_i,R_i)$ to $(\phi_0, R_0)$ and the energy monotonicity
inequality \eqref{mono0}, we conclude that 
$$(\phi_0,R_0)(x)=(\phi_0, R_0)(\frac{x}{|x|}), \ \forall x\in B_2, $$
is homogeneous of degree zero. Since $\phi_0$ is a harmonic function in $B_2$, it follows
that $\phi_0$ is a constant. Thus
$$\int_{\mathbb S^2}|\nabla_{\mathbb S^2} R_0|^2\,dH^2=\Theta^1((\phi, R), x_0)\ge \epsilon_0^2,$$
and $R_0\in C^\infty(\mathbb S^2, SO(3))$ is a nontrivial harmonic map. Since
$\Pi_1(\mathbb S^3)=\{0\}$, it follows that
there exists a nontrivial harmonic map $\widehat{R_0}\in C^\infty(\mathbb S^2, \mathbb S^3)$ 
such that $R_0=\pi\circ \widehat{R_0}$. Moreover, it follows from the stability inequality \eqref{stable1}
that $\widehat{R_0}$ is a stable harmonic map from $\mathbb S^2$ to $\mathbb S^3$, i.e.
\begin{equation}\label{stable6}
\int_{\mathbb S^2}\big(|\nabla_{\mathbb S^2} \omega|^2-|\nabla\widehat{R_0}|^2|\omega|^2\big)\,dH^2\ge 0
\end{equation}
for any $\omega\in C^\infty(\mathbb S^2, T_{\widehat{R_0}}\mathbb S^3)$. However it follows
from Schoen-Uhlenbeck \cite{SU1984} that there is no nontrivial stable harmonic map from
$\mathbb S^2$ to $\mathbb S^3$. We get a desired contradiction. Thus
the singular set $\Sigma$ of $(\phi, R)$ is empty. \end{proof}

\medskip
Theorem \ref{reg2} for the cases that $p>2$ can be summarized into the following theorem.

\begin{theorem} For $f\in L^\infty(\Omega,\mathbb R^3)$ and $M\in L^\infty(\Omega, SO(3))$,
and $\mu_1=\mu_c=\mu_2=1$, if $p\in (2, \frac{32}{15}]$ and
 $(\phi, R)\in H^1(\Omega,\mathbb R^3)\times W^{1,p}(\Omega, SO(3))$
is a stable, stationary weak solution of the Cosserat equation \eqref{Coss2}, 
then there exists $\alpha\in (0,1)$ such that 
$(\phi, R)\in C^{1,\alpha}(\Omega, \mathbb R^3)\times C^{\alpha}(\Omega, SO(3))$.
\end{theorem}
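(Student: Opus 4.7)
The plan is to mirror the $p=2$ argument just presented: argue by contradiction at an isolated singular point, blow up to a tangent map, use the stability inequality to prove strong convergence of the blow-up sequence, and apply the nonexistence theorem for stable $p$-harmonic maps from $\mathbb{S}^2$ to $\mathbb{S}^3$ to rule out the tangent map. The advantage here is that Theorem \ref{reg1} already provides a \emph{discrete} singular set $\Sigma$ when $p>2$, so only a single isolated singularity must be excluded. The main new feature is the nonlinear weight $|\nabla R|^{p-2}$ appearing in the stability inequality \eqref{stable3}, which must be absorbed by H\"older interpolation.

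Assuming for contradiction $x_0 \in \Sigma$, take $r_i \downarrow 0$ and set
\[
(\phi_i, R_i, f_i, M_i)(x) := \bigl(r_i^{(p-2)/2}\phi,\ R,\ r_i^{(p+2)/2}f,\ r_i^{p}M\bigr)(x_0 + r_i x), \qquad x \in B_2.
\]
The scaled monotonicity \eqref{mono_ineq2} combined with the singularity at $x_0$ yields uniform $H^1(B_2) \times W^{1,p}(B_2)$ bounds together with a lower bound $\int_{B_2}(|\nabla R_i|^p+|\nabla \phi_i|^2)\,dx \ge c\,\epsilon_0^p>0$, so a subsequence converges weakly to some $(\phi_0, R_0)$ with $\phi_0$ harmonic and $R_0$ a $p$-harmonic map into $SO(3)$. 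Upgrading this to strong convergence in $H^1_{\mathrm{loc}} \times W^{1,p}_{\mathrm{loc}}$ is the heart of the matter. Introduce the defect measure $\nu \ge 0$ via $\mu_i := (|\nabla R_i|^p + |\nabla \phi_i|^2)\,dx \rightharpoonup (|\nabla R_0|^p + |\nabla \phi_0|^2)\,dx + \nu$ and the concentration set
\[
\mathcal{S} := \Bigl\{\, y \in B_{3/2}\ :\ \Theta^{3-p}(\mu,y) \ge \tfrac{1}{2}\epsilon_0^p\,\Bigr\}.
\]
Adapting Lemmas \ref{bmo}--\ref{c5} to exponent $p$ shows $\mathcal{S}$ is closed, $\mathrm{supp}(\nu) \subset \mathcal{S}$, $\nu$ is comparable to the restriction of $H^{3-p}$ to $\mathcal{S}$, and $(\phi_i, R_i) \to (\phi_0, R_0)$ in $C^\alpha_{\mathrm{loc}} \cap W^{1,p}_{\mathrm{loc}}$ on $B_{3/2}\setminus \mathcal{S}$. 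Because Theorem \ref{reg1} applied to the blow-up sequence (together with a Marstrand-type argument as in its proof) forces $\mathcal{S}$ to be thin enough that $\mathrm{Cap}_p(\mathcal{S})=0$, for each $\delta>0$ there exists $\omega_\delta \in C_0^\infty(B_2)$ with $\omega_\delta \ge 1/2$ in a neighborhood of $\mathcal{S}$ and $\|\nabla \omega_\delta\|_{L^p(B_2)}^p \le \delta$. Testing \eqref{stable3} for $R_i$ with $\psi = \omega_\delta$ and invoking H\"older's inequality gives
\[
\int_{B_2} |\nabla R_i|^p \omega_\delta^2\,dx \le \tfrac{p+1}{2}\int_{B_2} |\nabla R_i|^{p-2}|\nabla \omega_\delta|^2\,dx \le C\,E_0^{(p-2)/p}\,\delta^{2/p},
\]
where $E_0 = \sup_i \int_{B_2}|\nabla R_i|^p$. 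A Vitali covering of $\mathcal{S}$ as in the $p=2$ case (with the $\phi_i$-contribution absorbed via the $W^{1,q}$ estimate coming from $\Delta \phi_i = r_i^{p/2}\,\mathrm{div}(R_i)+\tfrac12 f_i$) then gives $H^{3-p}(\mathcal{S}) \le C(\epsilon_0)\,\delta^{2/p}$, and sending $\delta \to 0$ forces $\nu \equiv 0$ on $B_1$.

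With strong convergence in hand, the monotonicity identity forces $(\partial_r \phi_0, \partial_r R_0) \equiv 0$, so $(\phi_0, R_0)$ is homogeneous of degree zero; $\phi_0$ is therefore a constant, and $R_0|_{\mathbb{S}^2}: \mathbb{S}^2 \to SO(3)$ is a nontrivial $p$-harmonic map. Stability passes to the limit using the $L^p$-strong convergence of $\nabla R_i$, so $R_0$ is stable as well. Since $\mathbb{S}^2$ is simply connected and $\pi:\mathbb{S}^3 \to SO(3)$ is a local isometry covering map, $R_0 = \pi \circ \widehat{R_0}$ for a nontrivial stable $p$-harmonic map $\widehat{R_0}: \mathbb{S}^2 \to \mathbb{S}^3$; such maps are ruled out precisely for $p \in [2, 32/15]$ by \cite{XY1995} and \cite{CCW2016}, yielding the desired contradiction and hence $\Sigma=\emptyset$. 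The main obstacle is the strong convergence step: the nonlinear weight $|\nabla R|^{p-2}$ in \eqref{stable3} must be interpolated by H\"older so as to retain the scale factor $\delta^{2/p}$, and the capacitary cutoff $\omega_\delta$ can only be constructed because $\mathcal{S}$ is thin at the critical Hausdorff dimension $3-p$, which itself relies on applying the Marstrand argument from Theorem \ref{reg1} to the blow-up sequence.
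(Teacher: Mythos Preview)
Your proof is correct, but it takes a different and more laborious route than the paper for the strong convergence step. The paper's argument is essentially a one-liner: since $p\in(2,3)$, the exponent $3-p\in(0,1)$ is \emph{not} an integer, so Marstrand's theorem (exactly as in Claim~1 of the proof of Theorem~\ref{reg1}) immediately forces the defect measure $\nu$ to vanish and hence gives strong $H^1\times W^{1,p}$ convergence of the blow-ups. No stability is needed at this stage; stability enters only at the very end, to rule out the nontrivial homogeneous tangent $p$-harmonic map via Gastel's Lemma~6.3 and Proposition~6.4 (which package the Xin--Yang and Chang--Chen--Wei nonexistence results you cite). By contrast, you reproduce the $p=2$ machinery---concentration set $\mathcal{S}$, $p$-capacity cutoff $\omega_\delta$, the H\"older interpolation of $|\nabla R_i|^{p-2}$ against $|\nabla\omega_\delta|^2$, and a Vitali covering---to reach the same conclusion. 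This works, but your invocation of a ``Marstrand-type argument'' to obtain $\mathrm{Cap}_p(\mathcal{S})=0$ is muddled: if you are willing to use Marstrand, it gives $H^{3-p}(\mathcal{S})=0$ outright and the whole capacity/stability step becomes redundant; conversely, $\mathrm{Cap}_p(\mathcal{S})=0$ already follows from $H^{3-p}(\mathcal{S})<\infty$ (which comes from Lemmas~\ref{cc3}--\ref{c5} alone), so Marstrand is not needed there. Your approach does have one conceptual merit: it gives an independent proof of strong convergence that uses stability rather than the arithmetic accident that $3-p$ is non-integral, and so it would survive in settings where the Marstrand obstruction does not apply.
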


\begin{proof} It follows from $2<p<3$ and Theorem \ref{reg1} that ${\rm{Sing}}(\phi, R)$ is discrete.
Suppose ${\rm{Sing}}(\phi, R)\not=\emptyset$. Then there exist $x_0\in {\rm{Sing}}(\phi, R)$ and
$r_0>0$ such that ${\rm{Sing}}(\phi, R)\cap B_{r_0}(x_0)=\{x_0\}$. For $r_k\rightarrow 0$, define
$(\phi_k, R_k)(x)=(\phi, R)(x_0+r_k x)$ for $x\in B_2$. As in the proof of Theorem \ref{reg1}, we can
apply the monotonicity inequality \eqref{mono_ineq2}, Lemma \ref{decay1}, and Marstrand theorem to show that 
there exists a nontrivial $(\phi_0, R_0)\in H^1(B_1, \R^3)\times W^{1,p}(B_1, SO(3))$ 
such that, after passing to a subsequence, $(\phi_k, R_2)\rightarrow (\phi_0, R_0)$ strongly in $H^1(B_1, \R^3)\times W^{1,p}(B_1, SO(3))$. Hence $(\phi_0, R_0)$ is of homogeneous degree zero, $\phi_0$ is constant
and $R_0\in C^{1,\alpha}(B_1\setminus\{0\}, SO(3))$ is a nontrivial, stable, stationary $p$-harmonic map. 
However, it follows from the stability Lemma 6.3 and Proposition 6.4 in Gastel \cite{Gastel2018}
that for $p\in (2, \frac{32}{15})$, any stable stationary $p$-harmonic map
$R(x)=R(\frac{x}{|x|})\in C^{1,\alpha}(B_1\setminus\{0\}, SO(3))$ must be constant. We get a desired contradiction.
Hence ${\rm{Sing}}(\phi, R)=\emptyset$ when $p\in (2, \frac{32}{15}]$. This completes the proof.
\end{proof} 

Finally we would like to point out that Theorem \ref{reg2} follows from Theorem 5.3 and Theorem 5.4. 

\medskip
\noindent{\bf Acknowledgements}. The paper was complete while the first author 
was a visiting PhD student of Purdue University. She would like to express her gratitude to
the Department of Mathematics for the hospitality. The second author is partially
supported by NSF grant 1764417.

\medskip


\begin{thebibliography}{99}

\bibitem{Cosserat} E. Cosserat, F. Cosserat, {\it Th\'eorie des corps d\'eformables}. Librairie Scientifique
(Paris: Hermann, 1909).

\bibitem{Eringen} A. C. Eringen,  Theory of micropolar elasticity. In Fracture: an advanced treatise (ed.
H. Liebowitz), Vol. 2, pp. 621-729 (academic, 1968). 

\bibitem{ChenLee1} Y. Chen, J. D. Lee, {\it Connecting molecular dynamics to micromorphic theory.
I. Instantaneous and averaged mechanical variables}. Physica A 322 (2003), 359-376.

\bibitem{ChenLee2} Y. Chen, J. D. Lee,  {\it Connecting molecular dynamics to micromorphic theory.
II. Balance laws}. Physica A 322 (2003), 376-392.


\bibitem{Neff1} P. Neff, {\it Geometrically exact Cosserat theory for bulk behaviour and think structures. Modelling and mathematical analysis}. Habilitationsschrift, TU Darmstadt (2004).

\bibitem{Neff2} P. Neff, {\it Existence of minimizers for a finite-strain micromorphic elastic solid}. Proc. Roy. Soc. Edinburgh Sect. A 136 (2006), 997-1012.

\bibitem{Neff3} P. Neff, M. Birsan, F. Osterbrink, {\it Existence theorem for geometrically nonlinear Cosserat
micropolar model under uniform convexity requirements}. J. Elasticity 121 (2015), 119-141.

\bibitem{Gastel2018} A. Gastel, {\it Regularity issues for Cosserat continua and p-harmonic maps.}
SIAM J. Math. Anal. 51 (2019), no. 6, 4287-4310.

\bibitem{HL1986} R. Hardt, F.H. Lin,  {\it Mappings minimizing the $L^p$ norm of the gradient.}
Comm. Pure Appl. Math. 40 (1987), 555-588.

\bibitem{Fuchs} M. Fuchs, {\it $p$-harmonic obstacle problems. I. Partial regularity theory.}
Ann. Mat. Pura Appl. (4) 156 (1990), 127-158.

\bibitem{Luckhaus} S. Luckhaus, {\it Partial H\"older continuity for minima of certain energies among maps into a Riemannian manifold}. Indiana Univ. Math. J. 37 (1988), 349-367.

\bibitem{Smith} R. T. Smith,
{\it THE SECOND VARIATION FORMULA FOR HARMONIC MAPPINGS}. Proc. AMS, vol 47, no. 1 (1975) 229-236.

\bibitem{SU1983} R. Schoen, K. Uhlenbeck, {\it A regularity theory for harmonic maps.}
J. Diff. Geom. 17 (1982), 307-335.

\bibitem{SU1984} R. Schoen, K. Uhlenbeck, {\it Regularity of minimizing harmonic maps into the sphere.}
Invent. Math. 78 (1984), 89-100.

\bibitem{XY1995} Y. L. Xin, Y. H. Yang, {\it Regularity of $p$-harmonic maps into certain manifolds with positive sectional curvature.} J. Reine Angew. Math. 466 (1995), 1-17.

\bibitem{CCW2016} S. C. Chang, J. T. Chen, S. W. Wei, {\it  Liouville properties for $p$-harmonic maps with finite $q$-energy.} Transactions AMS 368 (2016), 787-825.

\bibitem{HW1999}  M. C. Hong, C. Y. Wang, 
{\it On the singular set of stable-stationary harmonic maps}. 
Calc. Var. Partial Differential Equations 9 (1999), no. 2, 141-156.

\bibitem{LW2006}  F. H. Lin, C. Y. Wang, 
{\it Stable stationary harmonic maps to spheres}. 
Acta Math. Sin. (Engl. Ser.) 22 (2006), no. 2, 319-330.

\bibitem{Helein} F. H\'elein, {\it Regularite des applications faiblement harmoniques entre une surface et variete riemannienne}. CRAS, Paris 312 (1991) 591-596.

\bibitem{Evans} L. C. Evans, {\it Partial regularity for stationary harmonic maps into spheres}. 
Arch. Rat. Mech. Anal. 116 (1991) 101-113.

\bibitem{TW} T, Toro, C. Y. Wang, {\it Compactness properties of weakly $p$-harmonic maps into homogeneous spaces}. Indiana Univ. Math. J. 44 (1995), no. 1, 87-113.

\bibitem{Evans-Gariepy} L. C. Evans, R. Gariepy, 
Measure theory and fine properties of functions. Revised edition. 
Textbooks in Mathematics. CRC Press, Boca Raton, FL, 2015.

\bibitem{Marstrand} J. M. Marstrand, {\it Some fundamental geometrical properties of plane sets
of fractional dimensions}. Proc. London Math. Soc.(3) 4 (1954), 257-302.







\end{thebibliography}
\end{document}